\newtheorem{assumption}{Assumption}
\def\Image{\textnormal{Im}}
\def\Nplus{\mathbb{N}_+}
\def\card{\textnormal{card}}
\DeclareMathOperator*{\argmin}{arg\,min}
\def\Re{\mathbb{R}}
\begin{document}

\title{Global Optimality in Tensor Factorization, Deep Learning, and Beyond}

\author{\name Benjamin D. Haeffele \email bhaeffele@jhu.edu\\ 
\name Ren\'e Vidal \email rvidal@jhu.edu\\ 
\addr Department of Biomedial Engineering\\
Johns Hopkins University\\
Baltimore, MD 21218, USA 
}

\editor{}

\maketitle

\sloppy

\begin{abstract}
Techniques involving factorization are found in a wide range of applications and have enjoyed significant empirical success in many fields.  However, common to a vast majority of these problems is the significant disadvantage that the associated optimization problems are typically non-convex due to a multilinear form or other convexity destroying transformation.  Here we build on ideas from convex relaxations of matrix factorizations and present a very general framework which allows for the analysis of a wide range of non-convex factorization problems - including matrix factorization, tensor factorization, and deep neural network training formulations.  We derive sufficient conditions to guarantee that a local minimum of the non-convex optimization problem is a global minimum and show that if the size of the factorized variables is large enough then from any initialization it is possible to find a global minimizer using a purely local descent algorithm. Our framework also provides a partial theoretical justification for the increasingly common use of Rectified Linear Units (ReLUs) in deep neural networks and offers guidance on deep network architectures and regularization strategies to facilitate efficient optimization.


\end{abstract} 

\section{Introduction}


Models involving factorization or decomposition are ubiquitous across a wide variety of technical fields and application areas.  As a simple example relevant to machine learning, various forms of \textit{\textbf{matrix factorization}} are used in classical dimensionality reduction techniques such as Principle Component Analysis (PCA) and in more recent methods like non-negative matrix factorization or dictionary learning \citep{Lee:Nature1999,Elad:TSP06,Mairal:JMLR2010}.  In a typical matrix factorization problem, we might seek to find matrices $(U,V)$ such that the product $UV^T$ closely approximates a given data matrix $Y$ while at the same time requiring that $U$ and $V$ satisfy certain properties (e.g., non-negativity, sparseness, etc.).  This naturally leads to an optimization problem of the form
\begin{equation}
\label{eq:intro_matrix_fact_obj}
\min_{U,V} \ell(Y,UV^T) + \Theta(U,V)
\end{equation}
where $\ell$ is some function that measures how closely $Y$ is approximated by $UV^T$ and $\Theta$ is a regularization function to enforce the desired properties in $U$ and $V$. Unfortunately, aside from a few special cases (e.g., PCA), a vast majority of matrix factorization models suffer from the significant disadvantage that the associated optimization problems are non-convex and very challenging to solve.  For example, in \eqref{eq:intro_matrix_fact_obj} even if we choose $\Theta(U,V)$ to be jointly convex in $(U,V)$ and $\ell(Y,X)$ to be a convex function in $X$, the optimization problem is still typically a non-convex problem in $(U,V)$ due to the composition with the bilinear form $X=UV^T$.

Given this challenge, a common approach is to relax the non-convex factorization problem into a problem which is convex on the product of the factorized matrices, $X=UV^T$.  As a concrete example, in low-rank matrix factorization, one might be interested in solving a problem of the form
\begin{equation}
\label{eq:low_rank_fact}
\min_{U,V} \ell(Y,UV^T) \ \textnormal{subject to rank}(UV^T) \leq r
\end{equation}
where the rank constraint can be easily enforced by limiting the number of columns in the $U$ and $V$ matrices to be less than or equal to $r$.  However, aside from a few special choices of $\ell$, solving \eqref{eq:low_rank_fact} is a NP-hard problem in general.  Instead, one can relax \eqref{eq:low_rank_fact} into a fully convex problem by using a convex regularization that promotes low-rank solutions, such as the nuclear norm $\|X\|_*$, and then solve
\begin{equation}
\label{eq:priorwork_nuclear_norm}
\min_X \ell(Y,X) + \lambda \|X\|_*
\end{equation}
which can be done efficiently if $\ell(Y,X)$ is convex with respect to $X$ \citep{Cai:SJO08,Recht:SIAM10}.  Given a solution to \eqref{eq:priorwork_nuclear_norm}, $X_{opt}$, it is then simple to find a low-rank factorization $UV^T = X_{opt}$ via a singular value decomposition.  Unforunately, however, while the nuclear norm provides a nice convex relaxation for low-rank matrix factorization problems, nuclear norm relaxation does not capture the full generality of problems such as \eqref{eq:intro_matrix_fact_obj} as it does not necessarily ensure that $X_{opt}$ can be 'efficiently' factorized as $X_{opt}=UV^T$ for some $(U,V)$ pair which has the desired properties encouraged by $\Theta(U,V)$ (sparseness, non-negativity, etc.), nor does it provide a means to find the desired factors.  To address these issues, in this paper we consider the task of solving non-convex optimization problems directly in the factorized space and use ideas inspired from the convex relaxation of matrix factorizations as a means to analyze the non-convex factorization problem.  Our framework includes problems such as \eqref{eq:intro_matrix_fact_obj} as a special case but also applies much more broadly to a wide range of non-convex optimization problems; several of which we describe below.

\subsection{Generalized Factorization}

More generally, \textit{\textbf{tensor factorization}} models provide a natural extension to matrix factorization and have been employed in a wide variety of applications \citep{Cichocki:2009,Kolda:SIAMrev2009}.  The resulting optimization problem is similar to matrix factorization, with the difference that we now consider more general factorizations which decompose a multidimensional tensor $Y$ into a set of $K$ different factors $(X^1,\ldots,X^K)$, where each factor is also possibly a multidimensional tensor.  These factors are then combined via an arbitrary multilinear mapping $\Phi(X^1,\ldots,X^K) \approx Y$; i.e., $\Phi$ is a linear function in each $X^i$ term if the other $X^j, \ i\neq j$ terms are held constant.  This model then typically gives optimization problems of the form
\begin{equation}
\label{eq:intro_tensor_fact_obj}
\min_{X^1,\ldots,X^K} \ell(Y,\Phi(X^1,\ldots,X^K)) + \Theta(X^1,\ldots,X^K)
\end{equation}
where again $\ell$ might measure how closely $Y$ is approximated by the tensor $\Phi(X^1,\ldots,X^K)$ and $\Theta$ encourages the factors $(X^1,\ldots,X^K)$ to satisfy certain requirements.  Clearly, \eqref{eq:intro_tensor_fact_obj} is a generalization of \eqref{eq:intro_matrix_fact_obj} by taking $(X^1,X^2) = (U,V)$ and $\Phi(U,V) = UV^T$, and similar to matrix factorization, the optimization problem given by \eqref{eq:intro_tensor_fact_obj} will typically be non-convex regardless of the choice of $\Theta$ and $\ell$ functions due to the multilinear mapping $\Phi$.

While the tensor factorization framework is very general with regards to the dimensionalities of the data and the factors, a tensor factorization usually implies the assumption that the mapping $\Phi$ from the factorized space to the output space (the codomain of $\Phi$) is multilinear.  However, if we consider more general mappings from the factorized space into the output space (i.e., $\Phi$ mappings which are not restricted to be multilinear) then we can capture a much broader array of models in the 'factorized model' family.  For example, in \textit{\textbf{deep neural network training}} the output of the network is typically generated by performing an alternating series of a linear function followed by a non-linear function.  More concretely, if one is given training data consisting of $N$ data points of $d$ dimensional data, $V \in \Re^{N \times d}$, and an associated vector of desired outputs $Y \in \Re^N$, the goal then is to find a set of network parameters $(X^1,\ldots,X^K)$ by solving an optimization problem of the form \eqref{eq:intro_tensor_fact_obj} using a mapping
\begin{equation}
\label{eq:Phi_basic_NN}
\Phi(X^1,\ldots,X^K) = \psi_K(\psi_{K-1} ( \ldots \psi_2(\psi_1(V X^1) X^2) \ldots X^{K-1}) X^K)
\end{equation}
where each $X^i$ factor is an appropriately sized matrix and the $\psi_i(\cdot)$ functions apply some form of non-linearity after each matrix multiplication, e.g., a sigmoid function, rectification, max-pooling.  Note that although here we have shown the linear operations to be simple matrix multiplications for notational simplicity, this is easily generalized to other linear operators (e.g., in a convolutional network each linear operator could be a set of convolutions with a group of various kernels with parameters contained in the $X^i$ variables).

\subsection{Paper Contributions}

Our primary contribution is to extend ideas from convex matrix factorization and present a general framework which allows for a wide variety of factorization problems to be analyzed within a convex formulation.  Specifically, using this convex framework we are able to show that local minima of the non-convex factorization problem achieve the global minimum if they satisfy a simple condition.  Further, we also show that if the factorization is done with factorized variables of sufficient size, then from any initialization it is always possible to reach a global minimizer using purely local descent search strategies. 

Two concepts are key to our analysis framework: 1) the size of the factorized elements is not constrained, but instead fit to the data through regularization (for example, the number of columns in $U$ and $V$ is allowed to change in matrix factorization) 2) we require that the mapping from the factorized elements to the final output, $\Phi$, satisfies a positive homogeneity property.  Interestingly, the deep learning field has increasingly moved to using non-linearities such as Rectified Linear Units (ReLU) and Max-Pooling, both of which satisfy the positive homogeneity property, and it has been noted empirically that both the speed of training the neural network and the overall performance of the network is increased significantly when ReLU non-linearities are used instead of the more traditional hyperbolic tangent or sigmoid non-linearities \citep{Dahl:ICASSP2013,Maas:ICML2013, Krizhevsky:NIPS2012,Zeiler:ICASSP2013}.  We suggest that our framework provides a partial theoretical explanation to this phenomena and also offers directions of future research which might be beneficial in improving the performance of multilayer neural networks.

\section{Prior Work}

Despite the significant empirical success and wide ranging applications of the models discussed above (and many others not discussed), as we have mentioned, a vast majority of the above techniques models suffer from the significant disadvantage that the associated optimization problems are non-convex and very challenging to solve.  As a result, the numerical optimization algorithms often used to solve factorization problems -- including (but certainly not limited to) alternating minimization, gradient descent, stochastic gradient descent, block coordinate descent, back-propagation, and quasi-newton methods -- are typically only guaranteed to converge to a critical point or local minimum of the objective function \citep{Mairal:JMLR2010,Rumelhart:CogModel1988,Ngiam:ICML2011,Wright:1999,Xu:SIAM2013}. The nuclear norm relaxation of low-rank matrix factorization discussed above provides a means to solve factorization problems with reglarization promoting low-rank solutions\footnote{Similar convex relaxation techniques have also been proposed for low-rank tensor factorizations, but in the case of tensors finding a final factorization $X_{opt} = \Phi(X^1,\ldots,X^K)$ from a low-rank tensor can still be a challenging problem \citep{Tomioka:2010,Gandy:InvProb2011}}, but it fails to capture the full generality of problems such as \eqref{eq:intro_matrix_fact_obj} as it does not allow one to find factors, $(U,V)$, with the desired properties encouraged by $\Theta(U,V)$ (sparseness, non-negativity, etc.).  To address this issue, several studies have explored a more general convex relaxation via the matrix norm given by
\begin{equation}
\label{eq:tensor_norm}
\begin{split}
\|X\|_{u,v} &\equiv \inf_{r \in \Nplus} \ \inf_{U,V : UV^T=X} \sum_{i=1}^r \|U_i\|_u \|V_i\|_v \\
	&\equiv \inf_{r \in \Nplus} \ \inf_{U,V : UV^T=X} \sum_{i=1}^r \tfrac{1}{2} (\|U_i\|_u^2+\|V_i\|_v^2) 
\end{split}
\end{equation}
where $(U_i,V_i)$ denotes the $i$'th columns of $U$ and $V$, $\|\cdot\|_u$ and $\|\cdot\|_v$ are arbitrary vector norms, and the number of columns ($r$) in the $U$ and $V$ matrices is allowed to be variable \citep{bach08,bach13,Haeffele:ICML14}.  The norm in \eqref{eq:tensor_norm} has appeared under multiple names in the literature, including the projective tensor norm, decomposition norm, and atomic norm, and by replacing the column norms in \eqref{eq:tensor_norm} with gauge functions the formulation can be generalized to incorporate additional regularization on $(U,V)$, such as non-negativity, while still being a convex function of $X$ \citep{bach13}.  Further, it is worth noting that for particular choices of the $\|\cdot\|_u$ and $\|\cdot\|_v$ vector norms, $\|X\|_{u,v}$ reverts to several well known matrix norms and thus provides a generalization of many commonly used regularizers.  Notably, when the vector norms are both $l_2$ norms, $\|X\|_{2,2} = \|X\|_*$, and the form in \eqref{eq:tensor_norm} is the well known variational definition of the nuclear norm.

The $\|\cdot\|_{u,v}$ norm has the appealing property that by an appropriate choice of vector norms $\|\cdot\|_u$ and $\|\cdot\|_v$ (or more generally gauge functions), one can promote desired properties in the factorized matrices $(U,V)$ while still working with a problem which is convex w.r.t. the product $X=UV^T$.  Based on this concept, several studies have explored optimization problems over factorized matrices $(U,V)$ of the form
\begin{equation}
\label{eq:tensor_norm_obj}
\min_{U,V} \ell(UV^T)+\lambda \|UV^T\|_{u,v}
\end{equation}
Even though the problem is still non-convex w.r.t. the factorized matrices $(U,V)$, it can be shown using ideas from \citet{burer05} on factorized semidefinite programming that, subject to a few general conditions, then local minima of \eqref{eq:tensor_norm_obj} will be global minima  \citep{bach08,Haeffele:ICML14}, which can significantly reduce the dimensionality of some large scale optimization problems.  Unfortunately, aside from a few special cases, the norm defined by \eqref{eq:tensor_norm} (and related regularization functions such as those discussed by \citet{bach13}) cannot be evaluated efficiently, much less optimized over, due to the complicated and non-convex nature of the definition.  As a result, in practice one is often forced to replace \eqref{eq:tensor_norm_obj} by the closely related problem
\begin{equation}
\label{eq:tensor_fact_obj}
\min_{U,V} \ell(UV^T)+\lambda \sum_{i=1}^r \|U_i\|_u \|V_i\|_v = 
\min_{U,V} \ell(UV^T)+\lambda \sum_{i=1}^r \tfrac{1}{2} (\|U_i\|_u^2 + \|V_i\|_v^2)
\end{equation}

However, \eqref{eq:tensor_norm_obj} and \eqref{eq:tensor_fact_obj} are not equivalent problems, due to the fact that solutions to \eqref{eq:tensor_norm_obj} include \textit{any} factorization $(U,V)$ such that their product equals the optimal solution, $UV^T = X_{opt}$, while in \eqref{eq:tensor_norm_obj} one is specifically searching for a factorization $(U,V)$ that achieves the infimum in \eqref{eq:tensor_norm}; in brief, solutions to \eqref{eq:tensor_fact_obj} will be solutions to \eqref{eq:tensor_norm_obj}, but the converse is not true.  As a consequence, results guaranteeing that local minima of the form \eqref{eq:tensor_norm_obj} will be global minima cannot be applied to the formulation in \eqref{eq:tensor_fact_obj}, which is typically more useful in practice.  Here we focus our analysis on the more commonly used family of problems, such as \eqref{eq:tensor_fact_obj}, and show that similar guarantees can be provided regarding the global optimality of local minima.  Additionally, we show that these ideas can be significantly extended to a very wide range of non-convex models and regularization functions, with applications such as tensor factorization and certain forms of neural network training being additional special cases of our framework.

In the context of neural networks, \citet{Bengio:NIPS2005} showed that for neural networks with a single hidden layer, if the number of neurons in the hidden layer is not fixed, but instead fit to the data through a sparsity inducing regularization, then the process of training a globally optimal neural network is analgous to selecting a finite number of hidden units from the infinite dimensional space of all possible hidden units and taking a weighted summation of these units to produce the output. Further, these ideas have very recently been used to analyze the generalization performance of such networks \citep{Bach:2014}.  Here, our results take a similar approach and extend these ideas to certain forms of multi-layer neural networks.  Additionally, our framework provides sufficient conditions on the network architecture to guarantee that from any intialization a globally optimal solution can be found by performing purely local descent on the network weights.

\section{Preliminaries}

Before we present our main results, we first describe our notation system and recall a few definitions.  

\subsection{Notation}
Our formulation is fairly general in regards to the dimensionality of the data and factorized variables.  As a result, to simplify the notation, we will use capital letters as a shorthand for a set of dimensions, and individual dimensions will be denoted with lower case letters.  For example, $X \in \Re^{d_1 \times \ldots \times d_N} \equiv X \in \Re^D$ for $D = d_1 \times \ldots \times d_N$; we also denote the cardinality of $D$ as $\card(D) = \prod_{i=1}^N d_i$.  Similarly, $X \in \Re^{D \times R} \equiv X \in \Re^{d_1 \times \ldots \times d_N \times r_1 \times \ldots \times r_M}$ for $D=d_1 \times \ldots \times d_N$ and $R=r_1 \times \ldots \times r_M$.
%

Given an element from a tensor space, we will use a subscript to denote a slice of the tensor along the last dimension.  For example, given a matrix $X \in \Re^{d_1 \times r}$, then $X_i \in \Re^d_1, i \in \{1,\ldots,r\}$, denotes the $i$'th column of $X$.  Similarly, given a cube $X \in \Re^{d_1 \times d_2 \times r}$ then $X_i \in \Re^{d_1 \times d_2}, i \in \{1\ldots,r\}$, denotes the $i$'th slice along the third dimension.  Further, given two tensors with matching dimensions except for the last dimension, $X \in \Re^{D \times r_x}$ and $Y \in \Re^{D \times r_y}$, we will use $[X \ Y] \in \Re^{D \times (r_x+r_y)}$ to denote the concatenation of the two tensors along the last dimension. 

We denote the dot product between two elements from a tensor space $(x \in \Re^D, y\in \Re^D)$ as $\left<x,y\right> = vec(x)^T vec(y)$, where $vec(\cdot)$ denotes flattening the tensor into a vector.  For a function $\theta(x)$, we denote its image as $\Image(\theta)$ and its Fenchel dual as $\theta^*(x) \equiv \sup_z \left<x,z\right> - \theta(z)$.  The gradient of a differentiable function $\theta(x)$ is denoted $\nabla \theta(x)$, and the subgradient of a convex (but possibly non-differentiable) function $\theta(x)$ is denoted $\partial \theta(x)$.  For a differentiable function with multiple variables $\theta(x^1,\ldots,x^K)$, we will use $\nabla_{x^i} \theta(x^1,\ldots,x^K)$ to denote the portion of the gradient corresponding to $x^i$. The space of non-negative real numbers is denoted $\Re_+$, and the space of positive integers is denoted $\Nplus$.

\subsection{Definitions}
We now make/recall a few general definitions and well known facts which will be used in our analysis.
\begin{definition}
A \textbf{size-r set of K factors} $(X^1,\ldots,X^K)_r$ is defined to be a set of $K$ tensors where the final dimension of each tensor is equal to $r$.  This is to be interpreted $(X^1,\ldots,X^K)_r \in \Re^{(D^1 \times r)} \times \ldots \times \Re^{(D^K \times r)}$.
\end{definition}
\begin{definition}
The \textbf{indicator function of a set $C$} is defined as
\begin{equation}
\delta_C(x) = \left\{ \begin{array}{cc} 0 & x \in C \\ \infty & x \notin C \end{array} \right.
\end{equation}
\end{definition}
\begin{definition}
A function $\theta : \Re^{D^1} \times \ldots \times \Re^{D^N} \rightarrow \Re^D$ is \textbf{positively homogeneous with degree p} if $\theta(\alpha x^1,\ldots,\alpha x^N) = \alpha^p \theta(x^1,\ldots,x^N), \  \forall \alpha \geq 0$.
\end{definition}
Note that this definition also implies that $\theta(0,\ldots,0) = 0$ for $p\neq 0$.

%
\begin{definition}
A function $\theta : \Re^{D^1} \times \ldots \times \Re^{D^N} \rightarrow \Re_+$ is \textbf{positive semidefinite} if $\theta(0,\ldots,0)=0$ and $\theta(x^1,\ldots,x^N) \geq 0, \ \forall (x^1,\ldots,x^N)$.
\end{definition}
\begin{definition}
The \textbf{one-sided directional derivative} of a function $\theta(x)$ at a point $x$ in the direction $z$ is denoted $\theta(x)(z)$ and defined as
$d\theta(x)(z) \equiv \lim_{\epsilon \searrow 0} \ \ (\theta(x+\epsilon z) - \theta(x)) \epsilon^{-1}$.
\end{definition}
Also, recall that for a differentiable function $\theta(x)$, $d\theta(x)(z) = \left< \nabla \theta(x), z \right>$.

\section{Problem Formulation}
Returning to the motivating example from the introduction \eqref{eq:intro_tensor_fact_obj}, we now define the family of mapping functions from the factors into the output space and the family of regularization functions on the factors ($\Phi$ and $\Theta$, respectively) which we will study in our framework.  

\subsection{Factorization Mappings}
In this paper, we consider mappings $\Phi$ which are based on a sum of what we refer to as an \textbf{elemental mapping}.  Specifically, if we are given a size-$r$ set of $K$ factors $(X^1,\ldots,X^K)_r$, the elemental mapping $\phi : \Re^{D^1} \times \ldots \times \Re^{D^K} \rightarrow \Re^D$ takes a slice along the last dimension from each tensor in the set of factors and maps it into the output space.  We then define the full mapping to be the sum of these elemental mappings along each of the $r$ slices in the set of factors.  The only requirement we impose on the elemental mapping is that it must be positively homogeneous.  More formally,
\begin{definition}
An \textbf{elemental mapping}, $\phi : \Re^{D^1} \times \ldots \times \Re^{D^K} \rightarrow \Re^D$ is any mapping which is positively homogeneous with degree $p \neq 0$.  The \textbf{r-element factorization mapping} $\Phi_r : \Re^{ (D^1 \times r) } \times \ldots \times \Re^{(D^K \times r) } \rightarrow \Re^D$ is defined as
\begin{equation}
\label{eq:Phi_r_def}
\Phi_r(X^1,\ldots,X^K) = \sum_{i=1}^r \phi(X^1_i,\ldots,X^K_i).
\end{equation}
\end{definition}
As we do not place any restrictions on the elemental mapping, $\phi$, beyond the requirement that it must be positively homogeneous, there are a wide range of problems that can be captured by a mapping with form \eqref{eq:Phi_r_def}. Several example problems which can be placed in this framework include:

\textbf{\textit{Matrix Factorization}}:  The elemental mapping, $\phi : \Re^{d_1} \times \Re^{d_2} \rightarrow \Re^{d_1 \times d_2}$
\begin{equation}
\phi(u,v) = uv^T
\end{equation}
is positively homogeneous with degree 2 and $\Phi_r(U,V) = \sum_{i=1}^r U_i V_i^T = UV^T$ is simply matrix multiplication for matrices with $r$ columns.

\textbf{\textit{Tensor Decomposition - CANDECOMP/PARAFAC (CP)}}: Slightly more generally, the elemental mapping $\phi : \Re^{d_1} \times \ldots \times \Re^{d_K} \rightarrow \Re^{d_1 \times \ldots \times d_K}$
\begin{equation}
\phi(x^1,\ldots,x^K) = x^1 \otimes \cdots \otimes x^K
\end{equation}
(where $\otimes$ denotes the tensor outer product) results in $\Phi_r(X^1,\ldots,X^K)$ being the mapping used in the rank-$r$ CANDECOMP/PARAFAC (CP) tensor decomposition  model \citep{Kolda:SIAMrev2009}.  Further, instead of choosing $\phi$ to be a simple outer product, we can also generalize this to be any multilinear function of the factors $(X^1_i,\ldots,X^K_i)$\footnote{We note that more general tensor decompositions, such as the general form of the Tucker decomposition, do not explicitly fit inside the framework we describe here; however, by using similar arguments to the ones we develop here, it is possible to show analogous results to those we derive in this paper for more general tensor decompositions, which we do not show for clarity of presentation.}.

\textbf{\textit{Neural Networks with Rectified Linear Units (ReLU)}}: Let $\psi^+(x) \equiv \max \{x,0\}$ be the linear rectification function, which is applied element-wise to a tensor $x$ of arbitrary dimension.  Then if we are given a matrix of training data $V \in \Re^{N \times d_1}$, the elemental mapping $\phi(x^1,x^2) : \Re^{d_1} \times \Re^{d_2} \rightarrow \Re^{N \times d_2}$
\begin{equation}
\label{eq:3_layer_phi}
\phi(x^1,x^2) = \psi^+(V x^1) (x^2)^T
\end{equation}
results in a mapping $\Phi_r(X^1,X^2) = \psi^+(V X^1) (X^2)^T$, which can be interpreted as producing the $d_2$ outputs of a 3 layer neural network with $r$ hidden units in response to the input of $N$ data points of $d_1$ dimensional data, $V$.  The hidden units have a ReLU non-linearity; the other units are linear; and the $(X^1,X^2) \in \Re^{d_1 \times r} \times \Re^{d_2 \times r}$ matrices contain the connection weights from the input-to-hidden and hidden-to-output layers, respectively.

By utilizing more complicated definitions of $\phi$, it is possible to consider a broad range of neural network architectures. As a simple example of networks with multiple hidden layers, an elemental mapping such as $\phi : \Re^{d_1 \times d_2} \times \Re^{d_2 \times d_3} \times \Re^{d_3 \times d_4} \times \Re^{d_4 \times d_5} \rightarrow \Re^{N \times d_5}$
\begin{equation}
\phi(x^1,x^2,x^3,x^4) = \psi^+(\psi^+(\psi^+(V x^1) x^2) x^3) x^4
\end{equation}
gives a $\Phi_r(X^1,X^2,X^3,X^4)$ mapping which is the output of a 5 layer neural network in response to the inputs in the $V \in \Re^{N \times d_1}$ matrix with ReLU non-linearities on all of the hidden layer units.  In this case, the network has the architecture that there are $r$, 4 layer fully-connected subnetworks, with each subnetwork having the same number of units in each layer as defined by the dimensions $\{d_2,d_3,d_4\}$.  The $r$ subnetworks are all then fed into a fully connected linear layer to produce the output.

More general still, since \textit{any} positively homogenous transformation is a potential elemental mapping, by an appropriate definition of $\phi$, one can describe neural networks with very general architectures, provided the non-linearities in the network are compatible with positive homogeneity.  Note that max-pooling and rectification are both positively homogeneous and thus fall within our framework.  For example, the well-known ImageNet network from \citep{Krizhevsky:NIPS2012}, which consists of a series of convolutional layers, linear-rectification, max-pooling layers, response normalization layers, and fully connected layers, can be described by taking $r=1$ and defining $\phi$ to be the entire transformation of the network (with the removal of the response normalization layers, which are not positively homogenous).  Note, however, that our results will rely on $r$ potentially changing size or being initialized to be sufficiently large, which limits the applicability of our results to current state-of-the-art network architectures (see discussion).

Here we have provided a few examples of common factorization mappings that can be cast in form \eqref{eq:Phi_r_def}, but certainly there are a wide variety of other problems for which our framework is relevant.  Additionally, while all of the mappings described above are positively homogeneous with degree equal to the degree of the factorization ($K$), this is not a requirement; $p\neq0$ is sufficient.  For example, non-linearities such as a rectification followed by raising each element to a non-zero power are positively homogeneous but of a possibly different degree.  What will turn out to be essential, however, is that we require $p$ to match the degree of positive homogeneity used to regularize the factors, which we will discuss in the next section.

\subsection{Factorization Regularization}

Inspired by the ideas from structured convex matrix factorization, instead of trying to analyze the optimization over a size-$r$ set of $K$ factors $(X^1,\ldots,X^K)_r$ for a fixed $r$, we instead consider the optimization problem where $r$ is possibly allowed to vary and adapted to the data through regularization.  To do so, we will define a regularization function similar to the $\|\cdot\|_{u,v}$ norm discussed in matrix factorization which is convex with respect to the output tensor but which still allows for regularization to be placed on the factors.  Similar to our definition in \eqref{eq:Phi_r_def}, we will begin by first defining an \textbf{elemental regularization function} $g : \Re^{D^1} \times \ldots \times \Re^{D^K} \rightarrow \Re_+ \cup \infty$ which takes as input slices of the factorized tensors along the last dimension and returns a non-negative number.  The requirements we place on $g$ are that it must be positively homogeneous and positive semidefinite.  Formally,
\begin{definition}
\label{def:elemental_reg}
We define an \textbf{elemental regularization function} $g : \Re^{D^1} \times \ldots \times \Re^{D^K} \rightarrow \Re_+ \cup \infty$, to be any function which is positive semidefinite and positively homogeneous.
\end{definition}


Again, due to the generality of the framework, there are a wide variety of possible elemental regularization functions.  We highlight two positive semidefinite, positively homogeneous functions which are commonly used and note that functions can be composed with summations, multiplications, and raising to non-zero powers to change the degree of positive homogeneity and combine various functions.

\textbf{\textit{Norms}}: Any norm $\|x\|$ is positively homogeneous with degree 1.
Note that because we make no requirement of convexity on $g$, this framework can also include functions such as the $l_q$ pseudo-norms for $q \in (0,1)$.

\textbf{\textit{Conic Indicators}}: The indicator function $\delta_C(x)$ of any conic set $C$ is positively homogeneous for all degrees.  Recall that a conic set, $C$, is simply any set such that if $x \in C$ then $\alpha x \in C, \ \forall \alpha \geq 0$. A few popular conic sets which can be of interest include the non-negative orthant $\Re^D_+$, the kernel of a linear operator $\{x : Ax = 0\}$, inequality constraints for a linear operator $\{x: Ax \geq 0\}$, and the set of positive semidefinite matrices.  Constraints on the non-zero support of $x$ are also typically conic sets.  For example, the set $\{ x : \|x\|_0 \leq n \}$ is a conic set, where $\|x\|_0$ is simply the number of non-zero elements in $x$ and $n$ is a positive integer.  More abstractly, conic sets can also be used to enforce invariances w.r.t. positively homogeneous transformations.  For example, given two positively homogeneous functions $\theta(x), \ \theta'(x)$ with equal degrees of positive homogeneity, the sets $\{x : \theta(x) = \theta'(x)\}$ and $\{x : \theta(x) \geq \theta'(x) \}$ are also conic sets.

A few typical formulations of a $g$ which are positively homogeneous with degree $K$ might include:
\begin{align}
	\label{eq:norm_prod_example}
	g(x^1,\ldots,x^K) &= \prod_{i=1}^K \|x^i\|_{(i)} \\
	\label{eq:norm_sum_example}
	g(x^1,\ldots,x^K) &= \tfrac{1}{K}\sum_{i=1}^K \|x^i\|_{(i)}^K \\
	\label{eq:non_seg_example}
	g(x^1,\ldots,x^K) &= \prod_{i=1}^K (\|x^i\|_{(i)} + \delta_{C_i} (x^i))
\end{align}
where all of the norms, $\|\cdot\|_{(i)}$, are arbitrary.  Forms \eqref{eq:norm_prod_example} and \eqref{eq:norm_sum_example} can be shown to be equivalent, in the sense that they give rise to the same $\Omega_{\phi,g}$ function, for all of the example mappings $\phi$ we have discussed here and by an appropriate choice of norm can induce various properties in the factorized elements (such as sparsity), while form \eqref{eq:non_seg_example} is similar but additionally constrains each factor to be an element of a conic set $C_i$ \citep[see][for examples from matrix factorization]{bach08,bach13,Haeffele:ICML14}.

To define our regularization function on the output tensor, $X = \Phi(X^1,\ldots,X^K)$, it will be necessary that the elemental regularization function, $g$, and the elemental mapping, $\phi$, satisfy a few properties to be considered 'compatible' for the definition of our regularization function.  Specifically, we will require the following definition. 
\begin{definition}
\label{def:nondegenerate}
Given an elemental mapping $\phi$ and an elemental regularization function $g$, will we say that $(\phi,g)$ are a \textbf{nondegenerate pair} 
if 1) $g$ and $\phi$ are both positively homogeneous with degree $p$, for some $ p\neq 0$ and 2) $\forall X \in \Image(\phi) \backslash 0, \ \ \exists \mu \in (0,\infty]$ and $(\tilde{z}^1,\ldots,\tilde{z}^K)$ such that $\phi(\tilde{z}^1,\ldots,\tilde{z}^K) = X$, $g(\tilde{z}^1,\ldots,\tilde{z}^K)=\mu$, and $g(z^1,\ldots,z^K) \geq \mu$ for all $(z^1,\ldots,z^K)$ such that $\phi(z^1,\ldots,z^K) = X. $\footnote{Property 1 from the definition of a nondegenerate pair will be critical to our formulation.  Several of our results can be shown without Property 2, but Property 2 is almost always satisfied for most interesting choices of $(\phi,g)$ and is designed to avoid 'pathological' $\Omega_{\phi,g}$ functions (such as $\Omega_{\phi,g} (X) = 0 \ \forall X$).  For example, in matrix factorization with $\phi(u,v) = uv^T$, taking $g(u,v)=\delta_C(u) \|v\|^2$ for any arbitrary norm and conic set $C$ satisfies Property 1 but not Property 2, as we can always reduce the value of $g(u,v)$ by scaling $v$ by a constant $\alpha \in (0,1)$ and scaling $u$ by $\alpha^{-1}$ without changing the value of $\phi(u,v)$.}
\end{definition}

From this, we now define our main regularization function: 
\begin{definition}
Given an elemental mapping $\phi$ and an elemental regularization function $g$ such that $(\phi,g)$ are a nondegenerate pair, we define the \textbf{factorization regularization function}, $\Omega_{\phi,g}(X) : \Re^D \rightarrow \Re_+ \cup \infty$ to be
\begin{equation}
\label{eq:Omega_def}
\begin{split}
\Omega_{\phi,g}(X) \equiv \inf_{r \in \Nplus} \inf_{(X^1,\ldots,X^K)_r} \sum_{i=1}^r g(X^1_i,\ldots,X^K_i) & \\
 \textnormal{subject to } \ \Phi_r(X^1,\ldots,X^K) = X &
\end{split}
\end{equation}
with the additional condition that $\Omega_{\phi,g}(X) = \infty$ if $X \notin \bigcup_r \Image(\Phi_r)$.
\end{definition}

We will show that $\Omega_{\phi,g}$ is a convex function of $X$ and that in general the infimum in \eqref{eq:Omega_def} can always be achieved with a finitely sized factorization (i.e., $r$ does not need to approach $\infty$)\footnote{In particular, the largest $r$ needs to be is $\card(D)$, and we note that $\card(D)$ is a worst case upper bound on the size of the factorization. In certain cases the bound can be shown to be lower.  As an example, $\Omega_{\phi,g}(X) = \|X\|_*$ when $\phi(u,v) = u v^T$ and $g(u,v) = \|u\|_2 \|v\|_2$.  In this case the infimum can be achieved with $r \leq \textnormal{rank}(X) \leq \min \{\card(u),\card(v)\}$.}.  While $\Omega_{\phi,g}$ suffers from many of the practical issues associated with the matrix norm $\|\cdot\|_{u,v}$ discussed earlier (namely that in general it cannot be evaluated in polynomial time due to the complicated definition), because $\Omega_{\phi,g}(X)$ is a convex function on $X$, this allows us to 
use $\Omega_{\phi,g}$ purely as an analysis tool to derive results for a more tractable factorized formulation.

\subsection{Problem Definition}

To build our analysis, we will start by defining the convex (but typically non-tractable) problem, given by
\begin{equation}
\label{eq:convex_obj}
\min_{X,Q} F(X,Q) = \ell(X,Q) + \lambda \Omega_{\phi,g}(X) + H(Q).
\end{equation}
Here $X \in \Re^D$ is the output of the factorization mapping $X = \Phi(X^1,\ldots,X^K)$ as we have been discussing, and the $Q$ term is an optional additional set of non-factorized variables which can be helpful in modeling some problems (for example, to add intercept terms or to model outliers in the data). For our analysis we will assume the following:

\begin{assumption}
	$\ell(X,Q)$ is once differentiable and jointly convex in $(X,Q)$
\end{assumption}
\begin{assumption}
	$H(Q)$ is convex (but possibly non-differentiable)
\end{assumption}
\begin{assumption}
	$(\phi,g)$ are a nondegenerate pair as defined by Definition \ref{def:nondegenerate}; $\Omega_{\phi,g}(X)$ is as defined by \eqref{eq:Omega_def}; and $\lambda > 0$
\end{assumption}
\begin{assumption}
  The minimum of $F(X,Q)$ exists $\implies \emptyset \neq \argmin_{X,Q} F(X,Q)$.
\end{assumption}

As noted above, it is typically impractical to optimize over functions involving $\Omega_{\phi,g}(X)$, and, even if one were given an optimal solution to \eqref{eq:convex_obj}, $X_{opt}$, one would still need to solve the problem given in \eqref{eq:Omega_def} to recover the desired $(X^1,\ldots,X^K)$ factors.  Therefore, we use \eqref{eq:convex_obj} merely as analysis tool and instead tailor our results to the non-convex optimization problem given by
\begin{equation}
\label{eq:factor_obj}
\begin{split}
\min_{(X^1,\ldots,X^K)_r,Q} &f_r(X^1,\ldots,X^K,Q) \equiv \\
&\ell(\Phi_r(X^1,\ldots,X^K),Q) + \lambda \sum_{i=1}^r g(X^1_i,\ldots,X^K_i) + H(Q).
\end{split}
\end{equation}
We will show in the next section that any local minima of \eqref{eq:factor_obj} is a global minima if it satisfies the condition that one slice from each of the factorized tensors is all zero.  Further, we will also show that if $r$ is taken to be large enough then from any initialization we can always find a global minimum of \eqref{eq:factor_obj} by doing an optimization based purely on local descent.

\section{Main Analysis}

We begin our analysis by first showing a few simple properties and lemmas relevant to our framework. 

\subsection{Preliminary Results}

First, from the definition of $\Phi_r$ it is easy to verify that if $\phi$ is positively homogeneous with degree $p$, then $\Phi_r$ is also positively homogeneous with degree $p$ and satisfies the following proposition
\begin{proposition}
\label{prop:Phi_r_prop}
Given a size-$r_x$ set of $K$ factors, $(X^1,\ldots,X^K)_{r_x}$, and a size-$r_y$ set of $K$ factors, $(Y^1,\ldots,Y^K)_{r_y}$, then $\forall \alpha \geq 0, \beta \geq 0$
\begin{equation}
\Phi_{(r_x+r_y)}([\alpha X^1 \, \beta Y^1],\ldots,[\alpha X^K \, \beta Y^K]) = 
\alpha^p \Phi_{r_x}(X^1,\ldots,X^K) + \beta^p \Phi_{r_y}(Y^1,\ldots,Y^K)
\end{equation}
where recall, $[X \ Y]$ denotes the concatenation of $X$ and $Y$ along the final dimension of the tensor.
\end{proposition}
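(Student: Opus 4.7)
The proof is essentially a direct unpacking of definitions, so my plan is to reduce the statement to the positive homogeneity of the elemental mapping $\phi$ on each individual slice.

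First I would expand the left-hand side using the definition of $\Phi_r$ in \eqref{eq:Phi_r_def}. By construction of the concatenation $[\alpha X^j \, \beta Y^j]$ along the final tensor dimension, the $i$-th slice for $1 \leq i \leq r_x$ is $\alpha X^j_i$, and the $i$-th slice for $r_x + 1 \leq i \leq r_x + r_y$ is $\beta Y^j_{i - r_x}$. Therefore
\begin{equation*}
\Phi_{(r_x+r_y)}([\alpha X^1 \, \beta Y^1],\ldots,[\alpha X^K \, \beta Y^K]) = \sum_{i=1}^{r_x} \phi(\alpha X^1_i,\ldots,\alpha X^K_i) + \sum_{i=1}^{r_y} \phi(\beta Y^1_i,\ldots,\beta Y^K_i).
\end{equation*}

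Next I would apply the hypothesis that $\phi$ is positively homogeneous of degree $p$. Since $\alpha \geq 0$ and $\beta \geq 0$, the definition gives $\phi(\alpha X^1_i,\ldots,\alpha X^K_i) = \alpha^p \phi(X^1_i,\ldots,X^K_i)$ and likewise $\phi(\beta Y^1_i,\ldots,\beta Y^K_i) = \beta^p \phi(Y^1_i,\ldots,Y^K_i)$. Pulling the scalars out of each sum and reassembling via the definition of $\Phi_{r_x}$ and $\Phi_{r_y}$ yields the claimed identity.

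I do not anticipate any real obstacle: the only subtlety is that positive homogeneity of $\phi$ is only guaranteed for nonnegative scalars, which is exactly the hypothesis $\alpha,\beta \geq 0$ stated in the proposition, so there is nothing delicate to check. The identification of the slices of the concatenated tensors with $\alpha X^j_i$ and $\beta Y^j_i$ uses the convention introduced earlier that subscripts denote slices along the last dimension and $[\cdot \; \cdot]$ denotes concatenation along that same dimension; once this bookkeeping is made explicit, the proof is a two-line computation.
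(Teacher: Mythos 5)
Your proof is correct and follows exactly the route the paper intends: the paper states this proposition as an easy consequence of the definition of $\Phi_r$ and the positive homogeneity of $\phi$, which is precisely the slice-by-slice expansion and rescaling argument you give. Nothing is missing.
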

Further, $\Omega_{\phi,g}(X)$ satisfies the following proposition:
\begin{proposition}
\label{prop:Omega_properties}
The function $\Omega_{\phi,g} : \Re^D \rightarrow \Re \cup \infty$ as defined in \eqref{eq:Omega_def} has the properties
\begin{enumerate}
\item $\Omega_{\phi,g}(0) = 0$ and $\Omega_{\phi,g}(X) > 0 \ \ \forall X \neq 0$.
\item $\Omega_{\phi,g}$ is positively homogeneous with degree 1.
\item $\Omega_{\phi,g}(X+Y) \leq \Omega_{\phi,g}(X)+\Omega_{\phi,g}(Y) \ \ \forall (X,Y)$ 
\item $\Omega_{\phi,g}(X)$ is convex w.r.t. $X \in \Re^D$.
\item The infimum in \eqref{eq:Omega_def} can be achieved with $r \leq \card(D) \ \ \forall X$ s.t. $\Omega_{\phi,g}(X) < \infty$.
\end{enumerate}
\end{proposition}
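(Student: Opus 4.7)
The plan is: first handle Properties 2, 3, 4 by direct manipulation of factorizations; then handle Property 5 by a Carath\'eodory-style reduction; and finally tackle the positivity assertion in Property 1, which is where the main obstacle lies.

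For Property 2, I would take any factorization $(X^1,\ldots,X^K)_r$ of $X$ and uniformly rescale every factor by $\alpha^{1/p}$. Since $\phi$ and $g$ are both positively homogeneous of degree $p$, each summand of $\phi$ and of $g$ scales by $\alpha$, so the rescaled tuple factorizes $\alpha X$ with $g$-sum equal to $\alpha$ times the original. The reverse inequality is symmetric, yielding $\Omega_{\phi,g}(\alpha X)=\alpha\,\Omega_{\phi,g}(X)$ for $\alpha>0$. Combined with $\Omega_{\phi,g}(0)=0$ via the trivial zero factorization (using $\phi(0,\ldots,0)=0$ for $p\neq 0$ and positive semidefiniteness of $g$), this gives Property 2 and the $\Omega_{\phi,g}\ge 0$ half of Property 1. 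Property 3 follows by concatenating factorizations of $X$ and $Y$ along the last dimension: Proposition~\ref{prop:Phi_r_prop} with $\alpha=\beta=1$ shows the concatenation factorizes $X+Y$ and the $g$-sum is additive, hence $\Omega_{\phi,g}(X+Y)\le\Omega_{\phi,g}(X)+\Omega_{\phi,g}(Y)$ after taking infima. Property 4 is then immediate, since any non-negative positively homogeneous subadditive function is convex.

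For Property 5 I would run a Carath\'eodory-style pruning. Starting from any factorization $X=\sum_{i=1}^{r}\phi(z_i)$ with $r>\card(D)$, the vectors $\phi(z_i)\in\Re^D$ must be linearly dependent, giving some $\beta\neq 0$ with $\sum_i\beta_i\phi(z_i)=0$. After possibly replacing $\beta$ by $-\beta$, arrange that $\sum_i\beta_i g(z_i)\ge 0$; since $g\ge 0$ and $\beta\neq 0$, this can always be done while keeping at least one $\beta_j>0$. Setting $t=\min_{\beta_i>0}\beta_i^{-1}>0$ gives $1-t\beta_i\ge 0$ for every $i$, with equality at the minimizing index. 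Replacing each $z_i$ by $(1-t\beta_i)^{1/p}z_i$ (and dropping terms where the coefficient vanishes) rescales $\phi(z_i)$ and $g(z_i)$ by $(1-t\beta_i)$ via positive homogeneity, producing a new factorization of $X$ with strictly fewer non-zero terms and $g$-sum reduced by $t\sum_i\beta_i g(z_i)\ge 0$. Iterating terminates with $r\le\card(D)$, and applying the reduction to a minimizing sequence shows the infimum in \eqref{eq:Omega_def} is unchanged under this restriction.

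The positivity half of Property 1 is the main obstacle, because the nondegenerate pair condition bounds $g$ from below only on single-factor representations, while $\Omega_{\phi,g}$ involves sums. The plan is to derive a uniform bound $\|\phi(z)\|\le C\,g(z)$ for some constant $C$ and all $z$. Given such a $C$, the triangle inequality yields $\|X\|\le\sum_i\|\phi(z_i)\|\le C\sum_i g(z_i)$ for every factorization, so $\Omega_{\phi,g}(X)\ge C^{-1}\|X\|>0$ whenever $X\neq 0$. The nondegenerate pair condition first ensures $g(z)=0\Rightarrow\phi(z)=0$ (else $g(z)=0<\mu(\phi(z))$ contradicts the definition of $\mu$), so the ratio $\|\phi(z)\|/g(z)$ is well-defined wherever $\phi(z)\neq 0$; it is also invariant along rays by the matched-degree positive homogeneity and hence descends to the unit sphere of directions in $\Re^D$. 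Finiteness of $C$ then reduces to a compactness argument on this sphere: if $\|\phi(z_n)\|/g(z_n)\to\infty$, a subsequential limit direction $Y^{\ast}$ would force $\mu(Y^{\ast})=0$, contradicting the nondegenerate pair condition. Making this limit rigorous (likely requiring lower-semicontinuity of $g$ or closedness of $\phi$) is the delicate step.
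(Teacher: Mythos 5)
Your treatment of Properties 2, 3 and 4 matches the paper and is fine, and your Carath\'eodory-style pruning is a clean, self-contained way to show that restricting to $r \le \card(D)$ does not change the value of the infimum. But there are two genuine gaps. First, Property 5 as stated claims the infimum is \emph{achieved} (attained) by a factorization of size at most $\card(D)$, not merely that the restricted and unrestricted infima coincide; your argument, applied to a minimizing sequence, only yields the latter. The paper gets attainment by a different route: it rewrites $\Omega_{\phi,g}$ as the gauge of the convex hull of $\Gamma = \{\phi(z) : g(z)\le 1\}$, argues (via nondegeneracy) that $\Gamma$ and its hull are compact so the gauge infimum is attained, and then extracts a size-$\card(D)$ factorization from the convex combination. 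Attainment is not a cosmetic point here, because it is exactly what the paper uses to prove the positivity half of Property 1: once an optimal finite factorization of $X \neq 0$ exists, some slice has $\phi(X^1_i,\ldots,X^K_i)\neq 0$, and nondegeneracy forces $g$ on that slice to be at least $\mu > 0$, so $\Omega_{\phi,g}(X) > 0$.

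Second, your substitute argument for positivity --- a uniform bound $\|\phi(z)\| \le C\, g(z)$ --- does not follow from the nondegenerate-pair definition. Nondegeneracy gives a strictly positive fiberwise bound $\mu(X) = \inf\{g(z) : \phi(z)=X\} > 0$ for each nonzero $X \in \Image(\phi)$, but it says nothing about the infimum of $\mu(X)/\|X\|$ over directions; your compactness argument on the sphere needs continuity of $\phi$, lower-semicontinuity of $g$, boundedness of the normalized preimages, and closedness of $\Image(\phi)$ (the limit direction $Y^{\ast}$ may simply fail to lie in $\Image(\phi)$, in which case nondegeneracy is silent about it), none of which are assumptions of the proposition. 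You flag this step as delicate yourself, so the positivity claim of Property 1 is left unproved, and since your Property 5 lacks attainment you also cannot fall back on the paper's attainment-based route. To repair the proposal along the paper's lines, you would replace the uniform-ratio argument by the gauge/compactness construction of $\Gamma$, obtain attainment with $r \le \card(D)$, and then derive $\Omega_{\phi,g}(X) > 0$ for $X \neq 0$ directly from nondegeneracy applied to any nonzero slice of an optimal factorization.
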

\begin{proof} \textbf{Proposition \ref{prop:Omega_properties}}
Many of these properties can be shown in a similar fashion to results from the $\|\cdot\|_{u,v}$ norm discussed previously \citep{bach08,Yu:2014}.

1) By definition and the fact that $g$ is positive semidefinite, we always have $\Omega_{\phi,g}(X) \geq 0 \ \ \forall X$.  Trivially, $\Omega_{\phi,g}(0) = 0$ since we can always take $(X^1,\ldots,X^K) = (0,\ldots,0)$ to achieve the infimum.  For $X \neq 0$, because $(\phi,g)$ is a non-degenerate pair then $\sum_{i=1}^r g(X^1_i,\ldots,X^K_i) > 0$ for any $(X^1,\ldots,X^K)_r \ \ \textnormal{s.t.} \ \ \Phi_r(X^1,\ldots,X^K) = X$ and $r$ finite.  Property 5) shows that the infimum can be achieved with $r$ finite, completing the result.

2) For all $\alpha \geq 0$ and any $(X^1,\ldots,X^K)_r$ such that $X = \Phi_r(X^1,\ldots,X^K)$, note that from positive homogeneity $\Phi_r(\alpha^{1/p} X^1,\ldots,\alpha^{1/p} X^K) = \alpha X$ and $\sum_{i=1}^r g(\alpha^{1/p} X_i^1,\ldots,\alpha^{1/p} X_i^K) = \alpha \sum_{i=1}^r g(X^1_i,\ldots,X^K_i)$.  Applying this fact to the definition of $\Omega_{\phi,g}$ gives that $\Omega_{\phi,g}(\alpha X) = \alpha \Omega_{\phi,g,}(X)$. 

3) If either $\Omega_{\phi,g}(X) = \infty$ or $\Omega_{\phi,g}(Y)=\infty$ then the inequality is trivially satisfied.  Considering any $(X,Y)$ pair such that $\Omega_{\phi,g}$ is finite for both $X$ and $Y$, for any $\epsilon > 0$ let $(X^1,\ldots,X^K)_{r_x}$ be an $\epsilon$ optimal factorization of $X$.  Specifically, $\Phi_{r_x}(X^1,\ldots,X^K) = X$ and $\sum_{i=1}^{r_x} g(X^1_i,\ldots,X^K_i) \leq \Omega_{\phi,g}(X) + \epsilon$.  Similarly, let $(Y^1,\ldots,Y^K)_{r_y}$ be an $\epsilon$ optimal factorization of $Y$.  From Proposition \ref{prop:Phi_r_prop} we have $\Phi_{r_x+r_y}([X^1 \, Y^1],\ldots,[X^K \, Y^K]) = X+Y$, so $\Omega_{\phi,g}(X+Y) \leq \sum_{i=1}^{r_x} g(X^1_i,\ldots,X^K_i) + \sum_{j=1}^{r_y} g(Y^1_j,\ldots,X^K_j) \leq \Omega_{\phi,g}(X) + \Omega_{\phi,g}(Y) + 2\epsilon$.  Letting $\epsilon$ tend to 0 completes the result.

4) Convexity is given by the combination of properties 2 and 3.  Further, note that properties 2 and 3 also show that $\{X\in \Re^D : \Omega_{\phi,g}(X) < \infty \}$ is a convex set.

5) Let $\Gamma \subset \Re^D$ be defined as 
\begin{equation}
\Gamma = \{ X : \exists(x^1,\ldots,x^K), \ \phi(x^1,\ldots,x^K)=X, \ g(x^1,\ldots,x^K) \leq 1 \}
\end{equation}
Note that because $(\phi,g)$ is a nondegenerate pair, for any non-zero $X \in \Gamma$ there exists $\alpha \in [1,\infty)$ such that $\alpha X$ is on the boundary of $\Gamma$, so $\Gamma$ and its convex hull are compact sets. 

Further, note that $\Gamma$ contains the origin by definition of $\phi$ and $g$, so as a result, $\Omega_{\phi,g}$ is equivalent to a gauge function on the convex hull of $\Gamma$ 
\begin{equation}
\Omega_{\phi,g}(X) = \inf_{\mu} \{\mu : \mu \geq 0, \ X \in \mu \ \textnormal{conv} (\Gamma) \} \\
\end{equation}
Since the infimum w.r.t. $\mu$ is linear and constrained to a compact set, it must be achieved.  Therefore, there must exist $\mu_{opt} \geq 0$, $\{ \theta \in \Re^{\card(D)} : \theta_i \geq 0 \ \forall i, \ \sum_{i=1}^{\card(D)} \theta_i = 1 \}$, and $\{(Z_i^1,\ldots,Z_i^K) : \phi(Z^1_i,\ldots,Z^K_i) \in \Gamma \}_{i=1}^{\card(D)}$ such that $X = \mu_{opt} \sum_{i=1}^{\card(D)} \theta_i \phi(Z^1_i,\ldots,Z^K_i)$ and $\Omega_{\phi,g}(X) = \mu_{opt}$.

This, combined with positive homogeneity, completes the result as we can take $(X_i^1,\ldots,X_i^K) = ((\mu_{opt} \theta_i)^{1/p}Z^1_i,\ldots,(\mu_{opt} \theta_i)^{1/p}Z^K_i)$, which gives
\begin{equation}
\mu_{opt} = \Omega_{\phi,g}(X) \leq \sum_{i=1}^{\card(D)} g(X^1_i,\ldots,X^K_i) = \mu_{opt} \sum_{i=1}^{\card(D)} \theta_i g(Z^1_i,\ldots,Z^K_i) \leq \mu_{opt} \sum_{i=1}^{\card(D)} \theta_i = \mu_{opt}
\end{equation}
and shows that a factorization of size-$\card(D)$ which achieves the infimum must exist.
\end{proof}

We next derive the Fenchel dual of $\Omega_{\phi,g}$, which will provide a useful characterization of the subgradient of $\Omega_{\phi,g}$.

\begin{proposition}
The Fenchel dual of $\Omega_{\phi,g}(X)$ is given by
\begin{equation}
\Omega^*_{\phi,g}(W) = \left\{\begin{array}{cc} 0 & \Omega^\circ_{\phi,g}(W) \leq 1 \\ \infty & \textnormal{otherwise} \end{array} \right.
\end{equation}
where
\begin{equation}
\begin{split}
\label{eq:polar}
\Omega^\circ_{\phi,g}(W) \equiv \sup_{(z^1,\ldots,z^K)} \left<W,\phi(z^1,\ldots,z^K)\right> &\\
 \textnormal{subject to } g(z^1,\ldots,z^K) \leq 1 &
\end{split}
\end{equation}
\end{proposition}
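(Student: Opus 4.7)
The plan is to compute $\Omega^*_{\phi,g}$ directly from the definition of the Fenchel dual by exploiting the variational characterization of $\Omega_{\phi,g}$ in \eqref{eq:Omega_def} together with positive homogeneity of $\phi$ and $g$. The key observation is that once we substitute the infimum defining $\Omega_{\phi,g}$ into the Fenchel sup, the outer variable $X$ gets absorbed into the factorization variables and the resulting objective decouples into a sum over independent slices.

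First I would write $\Omega^*_{\phi,g}(W) = \sup_X \langle W,X \rangle - \Omega_{\phi,g}(X)$, plug in the definition of $\Omega_{\phi,g}$, and use the fact that subtracting an infimum produces a joint supremum. Eliminating $X$ via the constraint $\Phi_r(X^1,\ldots,X^K)=X$ and using linearity of the inner product, this yields
\begin{equation*}
\Omega^*_{\phi,g}(W) = \sup_{r \in \Nplus} \sup_{(X^1,\ldots,X^K)_r} \sum_{i=1}^r \bigl[ \langle W,\phi(X^1_i,\ldots,X^K_i)\rangle - g(X^1_i,\ldots,X^K_i) \bigr].
\end{equation*}
Because the summands are independent across $i$, this factors as $\sup_{r} r \cdot \sigma(W)$, where $\sigma(W) \equiv \sup_{(z^1,\ldots,z^K)} [\langle W,\phi(z^1,\ldots,z^K)\rangle - g(z^1,\ldots,z^K)]$. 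Taking $(z^1,\ldots,z^K)=(0,\ldots,0)$ shows $\sigma(W) \geq 0$, so the outer $\sup_r$ produces the dichotomy $\Omega^*_{\phi,g}(W) \in \{0,\infty\}$: the dual equals $0$ iff $\sigma(W)=0$ and equals $\infty$ iff $\sigma(W)>0$.

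The remaining step is to identify $\{W : \sigma(W)=0\}$ with $\{W : \Omega^\circ_{\phi,g}(W) \leq 1\}$, and this is the main obstacle since it requires the positive homogeneity scaling argument. For the forward direction, assuming $\Omega^\circ_{\phi,g}(W) \leq 1$, I would take any $(z^1,\ldots,z^K)$ with $g(z^1,\ldots,z^K) > 0$, rescale by $\alpha = g(z^1,\ldots,z^K)^{-1/p}$ so that $g(\alpha z^1,\ldots,\alpha z^K)=1$, and use $\alpha^p \langle W,\phi(z)\rangle = \langle W,\phi(\alpha z)\rangle \leq 1$ to conclude $\langle W,\phi(z)\rangle \leq g(z)$; for the degenerate case $g(z^1,\ldots,z^K)=0$, positive homogeneity lets me scale by arbitrarily large $\alpha$ while keeping $g(\alpha z)=0 \leq 1$, which forces $\langle W,\phi(z)\rangle \leq 0$ (else the polar would be infinite). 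Either way $\langle W,\phi(z)\rangle - g(z) \leq 0$ for all $z$, so $\sigma(W) = 0$. For the converse, if $\Omega^\circ_{\phi,g}(W) > 1$ then some $(z^1,\ldots,z^K)$ has $g(z)\leq 1$ and $\langle W,\phi(z)\rangle > 1$, whence $\langle W,\phi(z)\rangle - g(z) > 0$, and scaling by $\alpha \to \infty$ drives $\alpha^p[\langle W,\phi(z)\rangle - g(z)] \to \infty$, giving $\sigma(W)=\infty$.

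An alternative route would be to invoke Property 5 of Proposition \ref{prop:Omega_properties}, which already identifies $\Omega_{\phi,g}$ with a gauge function on $\mathrm{conv}(\Gamma)$; then the Fenchel dual of a gauge is the indicator of the polar set $\mathrm{conv}(\Gamma)^\circ$, and that polar set is exactly $\{W: \langle W,X\rangle \leq 1 \ \forall X \in \Gamma\} = \{W: \Omega^\circ_{\phi,g}(W) \leq 1\}$ by linearity of $\langle W,\cdot\rangle$. The direct computation above is more self-contained and makes the role of positive homogeneity transparent, so I would present it as the primary argument.
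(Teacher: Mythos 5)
Your proposal is correct and follows essentially the same route as the paper: substitute the variational definition of $\Omega_{\phi,g}$ into the Fenchel supremum, decouple into a per-slice sum, and use positive homogeneity to show the value is $0$ when $\Omega^\circ_{\phi,g}(W)\leq 1$ and $\infty$ otherwise. Your treatment is in fact slightly more explicit than the paper's (the rescaling argument showing $\langle W,\phi(z)\rangle\leq g(z)$ for all $z$, including the $g(z)=0$ case, is only asserted there), but the underlying argument is the same.
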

\begin{proof}
Recall, $\Omega^*_{\phi,g}(W) \equiv \sup_Z \left<W,Z\right>-\Omega_{\phi,g}(Z)$, so for $Z$ to approach the supremum we must have $Z \in \bigcup_r \Image(\Phi_r)$. As result, the problem is equivalent to
\begin{align}
\Omega^*_{\phi,g}(W) &= \sup_{r \in \Nplus} \sup_{(Z^1,\ldots,Z^K)_r} \left<W,\Phi_r(Z^1,\ldots,Z^K)\right> - \sum_{i=1}^r g(Z^1_i,\ldots,Z^K_i) \\
\label{eq:fenchel_sum}
&= \sup_{r \in \Nplus} \sup_{(Z^1,\ldots,Z^K)_r} \sum_{i=1}^r \left[\left<W,\phi(Z^1_i,\ldots,Z^K_i)\right> - g(Z^1_i,\ldots,Z^K_i) \right]
\end{align}
If $\Omega^\circ_{\phi,g}(W) \leq 1$ then all the terms in the summation of \eqref{eq:fenchel_sum} will be non-positive, so taking $(Z^1,\ldots,Z^K)=(0,\ldots,0)$ will achieve the supremum.  Conversely, if $\Omega^\circ_{\phi,g}(W) > 1$, then $\exists (z^1,\ldots,z^K)$ such that $\left<W,\phi(z^1,\ldots,z^K)\right> > g(z^1,\ldots,z^K)$.  This result, combined with the positive homogeneity of $\phi$ and $g$ gives that \eqref{eq:fenchel_sum} is unbounded by considering $(\alpha z^1,\ldots,\alpha z^K)$ as $\alpha \rightarrow \infty$.
\end{proof}

We briefly note that the optimization problem associated with \eqref{eq:polar} is typically referred to as the polar problem and is a generalization of the concept of a dual norm.  In practice solving the polar can still be very challenging and is often the limiting factor in applying our results in practice \citep[see][for further information]{bach13,Zhang:NIPS2013}.

With the above derivation of the Fenchel dual, we now recall that if $\Omega_{\phi,g}(X) < \infty$ then the subgradient of $\Omega_{\phi,g}(X)$ can be characterized by $\partial \Omega_{\phi,g} (X) = \{ W : \left< X,W \right> = \Omega_{\phi,g}(X) + \Omega^*_{\phi,g}(W) \}$.  This forms the basis for the following lemma which will be used in our main results
\begin{lemma}
	\label{lem:Omega_opt}
	Given a factorization $X = \Phi_r(X^1,\ldots,X^K)$ and a regularization function $\Omega_{\phi,g}(X)$, then the following conditions are equivalent:
	\begin{enumerate}
		\item $(X^1,\ldots,X^K)$ is an optimal factorization of $X$; i.e., $\sum_{i=1}^r g(X^1_i,\ldots,X^K_i) = \Omega_{\phi,g}(X)$
		\item $\exists W$ such that $\Omega^\circ_{\phi,g}(W) \leq 1$ and $\left< W,\Phi_r(X^1,\ldots,X^K) \right> = \sum_{i=1}^r g(X^1_i,\ldots,X^K_i)$,
		\item $\exists W$ such that $\Omega^\circ_{\phi,g}(W) \leq 1$ and $\forall i \in \{1,\ldots,r\}$, $\left<W,\phi(X^1_i,\ldots,X^K_i)\right> = g(X^1_i,\ldots,X^K_i)$
	\end{enumerate}
	Further, any $W$ which satisfies condition 2 or 3 satisfies both conditions 2 and 3 and $W \in \partial \Omega_{\phi,g}(X)$. 
\end{lemma}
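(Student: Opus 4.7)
The plan is to leverage the Fenchel duality established in the preceding proposition. Since $\Omega^*_{\phi,g}(W) = 0$ when $\Omega^\circ_{\phi,g}(W) \le 1$ and $\infty$ otherwise, the Fenchel–Young characterization $W \in \partial \Omega_{\phi,g}(X) \iff \langle W, X\rangle = \Omega_{\phi,g}(X) + \Omega^*_{\phi,g}(W)$ collapses to the statement: $W \in \partial \Omega_{\phi,g}(X)$ if and only if $\Omega^\circ_{\phi,g}(W) \le 1$ and $\langle W, X\rangle = \Omega_{\phi,g}(X)$. Every part of the lemma will be anchored to this via a single pointwise inequality.

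I would first establish the key ingredient: for any $W$ with $\Omega^\circ_{\phi,g}(W) \le 1$ and any tuple $(z^1,\ldots,z^K)$, one has $\langle W, \phi(z^1,\ldots,z^K)\rangle \le g(z^1,\ldots,z^K)$. This follows by rescaling. If $0 < g(z) < \infty$, set $\tilde z^i = g(z)^{-1/p} z^i$; positive homogeneity of $g$ gives $g(\tilde z) = 1$, so $\tilde z$ is feasible in the definition of $\Omega^\circ_{\phi,g}$, and scaling back through the degree-$p$ homogeneity of $\phi$ yields the claim. If $g(z) = 0$ then $(\alpha z^1,\ldots,\alpha z^K)$ is feasible for every $\alpha > 0$, so $\langle W,\phi(z)\rangle$ must be non-positive (otherwise sending $\alpha \to \infty$ would force $\Omega^\circ_{\phi,g}(W) = \infty$). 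The case $g(z) = \infty$ is trivial.

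With this inequality, the equivalences fall out quickly. Summing over $i = 1,\ldots,r$ gives $\langle W, \Phi_r(X^1,\ldots,X^K)\rangle \le \sum_i g(X^1_i,\ldots,X^K_i)$, so condition (2) forces aggregate equality, which forces per-term equality and yields (3); conversely (3) $\Rightarrow$ (2) by summation. For (2) $\Rightarrow$ (1): applying the pointwise inequality term by term to any competing factorization $(Z^1,\ldots,Z^K)_s$ of $X$ gives $\sum_j g(Z^1_j,\ldots,Z^K_j) \ge \langle W, X\rangle = \sum_i g(X^1_i,\ldots,X^K_i)$, so the given factorization attains the infimum in \eqref{eq:Omega_def}.

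The \textbf{main obstacle} is $(1) \Rightarrow (2)$, where I must produce a certifying $W$. I would invoke the gauge structure established in the proof of Proposition \ref{prop:Omega_properties}: $\Omega_{\phi,g}$ is the gauge of the compact convex set $\mathrm{conv}(\Gamma)$ containing the origin. For $X \ne 0$ with $\Omega_{\phi,g}(X) < \infty$, the point $X/\Omega_{\phi,g}(X)$ lies on the boundary of $\mathrm{conv}(\Gamma)$, and a supporting hyperplane supplies $W$ with $\langle W, Y\rangle \le 1$ for all $Y \in \Gamma$ — equivalent to $\Omega^\circ_{\phi,g}(W) \le 1$ — together with $\langle W, X\rangle = \Omega_{\phi,g}(X)$; combined with (1) this gives (2), and the $X = 0$ case is handled by $W = 0$. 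The closing claim, that any $W$ satisfying (2) or (3) satisfies both and belongs to $\partial \Omega_{\phi,g}(X)$, then follows immediately from the equivalences and the Fenchel–Young characterization stated at the outset.
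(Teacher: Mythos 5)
Your proof is correct and, for the equivalence of conditions 2 and 3 and for the closing subgradient claim, it follows essentially the paper's Fenchel-duality argument; but two steps take a genuinely different route. For $2 \Rightarrow 1$ the paper first argues that any $W$ satisfying 2/3 lies in $\partial \Omega_{\phi,g}(X)$ and then derives a contradiction via Fenchel--Young, whereas you compare the given factorization directly against an arbitrary competing factorization $(Z^1,\ldots,Z^K)_s$ using the pointwise bound $\langle W,\phi(z^1,\ldots,z^K)\rangle \leq g(z^1,\ldots,z^K)$; this is cleaner, bypasses the subgradient machinery entirely, and you also supply the rescaling proof of that pointwise bound (including the $g=0$ and $g=\infty$ cases), which the paper uses but never spells out. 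For $1 \Rightarrow 2$ the paper simply takes ``any $W \in \partial \Omega_{\phi,g}(X)$,'' implicitly assuming the subdifferential is nonempty, while you construct the certificate explicitly as a supporting hyperplane of $\mathrm{conv}(\Gamma)$ at $X/\Omega_{\phi,g}(X)$. Be aware that your construction carries exactly the same hidden assumption as the paper's: the supporting hyperplane theorem only guarantees some $w$ with $\langle w, y\rangle \leq \langle w, X/\Omega_{\phi,g}(X)\rangle$ on $\mathrm{conv}(\Gamma)$, and if every such hyperplane passes through the origin (i.e.\ $\langle w, X\rangle = 0$) it cannot be rescaled to give $\langle W,X\rangle = \Omega_{\phi,g}(X)$; ruling this out is equivalent to the nonemptiness of $\partial \Omega_{\phi,g}(X)$ (attainment in the polar) that the paper takes for granted, so your argument is at the same level of rigor --- neither better nor worse --- on this point, and it would be worth flagging the assumption explicitly rather than presenting the hyperplane step as automatic. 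Net effect: your version makes the easy directions more self-contained and elementary, at the cost of an explicit convex-geometry construction in $1 \Rightarrow 2$ that mirrors, rather than removes, the paper's implicit appeal to subgradient existence.
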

\begin{proof}
2 $\iff$ 3) 3 trivially implies 2 from the definition of $\Phi_r$. For the opposite direction, because $\Omega^\circ_{\phi,g}(W) \leq 1$ we have $\left<W,\phi(X^1_i,\ldots,X^K_i)\right> \leq g(X^1_i,\ldots,X^K_i) \ \ \forall i$. Taking the sum over $i$, we can only achieve equality in 2 if we have equality $\forall i$ in condition 3.  This also shows that any $W$ which satisfies condition 2 or 3 must also satisfy the other condition.

We next show that if $W$ satisfies conditions 2/3 then $W\in \partial \Omega_{\phi,g}(X)$.  First, from condition 2/3 and the definition of $\Omega_{\phi,g}$, we have $\Omega_{\phi,g}(X) \leq \sum_{i=1}^r g(X^1_i,\ldots,X^K_i) = \left<W,X\right> < \infty$.  Thus, recall that because $\Omega_{\phi,g}(X)$ is convex and finite at $X$, we have $\left< W,X \right> \leq \Omega_{\phi,g}(X) + \Omega^*_{\phi,g}(W)$ with equality iff $W \in \partial \Omega_{\phi,g}(X)$.  Now, by contradiction assume $W$ satisfies conditions 2/3 but $W \notin \partial \Omega_{\phi,g}(X)$.  From condition 2/3 we have $\Omega^*_{\phi,g}(W)=0$, so $\Omega_{\phi,g}(X)=\Omega_{\phi,g}(X)+\Omega^*_{\phi,g}(W) > \left< X,W \right> = \sum_{i=1}^r g(X^1_i,\ldots,X^K_i)$ which contradicts the definition of $\Omega_{\phi,g}(X)$.

1 $\implies$ 2) Any $W \in \partial \Omega_{\phi,g}(X)$ satisfies $\left< X,W \right> = \Omega_{\phi,g}(X) + \Omega^*_{\phi,g}(W) = \sum_{i=1}^r g(X^1_i,\ldots,X^K_i)$.
	
2 $\implies$ 1) By contradiction, assume $(X^1,\ldots,X^K)_r$ was not an optimal factorization of $X$.  This gives, $\Omega_{\phi,g}(X) < \sum_{i=1}^r g(X^1_i,\ldots,X^K_i) = \left< W,X \right> = \Omega_{\phi,g}(X) + \Omega^*_{\phi,g}(W) = \Omega_{\phi,g}(X)$, producing the contradiction.
\end{proof}

Finally, we show one additional lemma before presenting our main results.

\begin{lemma}
\label{lem:theta_search}

If $(X^1,\ldots,X^K,Q)$ is a local minimum of $f_r(X^1,\ldots,X^K,Q)$ as given in \eqref{eq:factor_obj}, then for any $\theta \in \Re^r$
\begin{equation}
\left< -\tfrac{1}{\lambda} \nabla_X \ell(\Phi_r(X^1,\ldots,X^K),Q),\sum_{i=1}^r \theta_i \phi(X^1_i,\ldots,X^K_i) \right> = \sum_{i=1}^r \theta_i g(X^1_i,\ldots,X^K_i)
\end{equation}

\end{lemma}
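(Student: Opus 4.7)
The plan is to construct a one-parameter curve of perturbations of the factors, indexed by $\epsilon$ in a two-sided neighborhood of $0$, along which both $\Phi_r$ and the regularizer $\sum_i g$ depend affinely on $\epsilon$. The local-minimum assumption then forces the derivative of $f_r$ along the curve to vanish at $\epsilon=0$, which is exactly the claimed identity.

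Concretely, given $\theta \in \Re^r$, choose $|\epsilon|$ small enough that $1+\epsilon\theta_i > 0$ for every $i$ (any $|\epsilon| < 1/(1+\max_i|\theta_i|)$ works), set $\alpha_i(\epsilon) = (1+\epsilon\theta_i)^{1/p} \geq 0$, and define the perturbed slices $\tilde X^j_i(\epsilon) = \alpha_i(\epsilon)\, X^j_i$. Because $\phi$ and $g$ are both positively homogeneous of the common degree $p$ (Definition \ref{def:nondegenerate}), applying each to the $i$-th rescaled slice multiplies the output by $\alpha_i(\epsilon)^p = 1+\epsilon\theta_i$. Summing over $i$ gives
\begin{equation*}
\Phi_r(\tilde X^1(\epsilon),\ldots,\tilde X^K(\epsilon)) = \Phi_r(X^1,\ldots,X^K) + \epsilon \sum_{i=1}^r \theta_i\, \phi(X^1_i,\ldots,X^K_i),
\end{equation*}
\begin{equation*}
\sum_{i=1}^r g(\tilde X^1_i(\epsilon),\ldots,\tilde X^K_i(\epsilon)) = \sum_{i=1}^r g(X^1_i,\ldots,X^K_i) + \epsilon \sum_{i=1}^r \theta_i\, g(X^1_i,\ldots,X^K_i),
\end{equation*}
both of which are genuinely affine in $\epsilon$.

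Now define $\varphi(\epsilon) := f_r(\tilde X^1(\epsilon),\ldots,\tilde X^K(\epsilon), Q)$. Since $\tilde X^j(\epsilon) \to X^j$ continuously as $\epsilon \to 0$ and $(X^1,\ldots,X^K,Q)$ is a local minimum of $f_r$, we have $\varphi(\epsilon) \geq \varphi(0)$ for all sufficiently small $|\epsilon|$, i.e. $\varphi$ has a local minimum at $\epsilon=0$. Because $\ell$ is differentiable (Assumption 1), the first term of $\varphi$ is differentiable at $\epsilon=0$ by the chain rule with derivative $\langle \nabla_X \ell(\Phi_r(X^1,\ldots,X^K),Q),\, \sum_i \theta_i \phi(X^1_i,\ldots,X^K_i) \rangle$; the second term is exactly linear in $\epsilon$ with slope $\lambda \sum_i \theta_i g(X^1_i,\ldots,X^K_i)$; and the third term $H(Q)$ is constant in $\epsilon$. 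Hence $\varphi'(0)$ exists, must equal $0$, and rearranging gives the lemma.

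The only subtle point is the justification that we may vary $\epsilon$ in a \emph{two-sided} neighborhood of $0$: without this we would only obtain a one-sided directional inequality rather than equality. This is precisely why we rescale by $\alpha_i(\epsilon) = (1+\epsilon\theta_i)^{1/p}$ rather than attempting a naive additive perturbation — restricting $|\epsilon|$ as above keeps each $\alpha_i(\epsilon)$ a well-defined non-negative real, so positive homogeneity (which requires non-negative scalars) is legitimately applicable for both signs of $\epsilon$.
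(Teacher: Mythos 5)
Your proof is correct and follows essentially the same route as the paper: perturb each slice multiplicatively by an amount controlled by $\theta_i$ and $\epsilon$, exploit the matched degree-$p$ positive homogeneity of $\phi$ and $g$, and invoke local minimality of $(X^1,\ldots,X^K,Q)$. The only difference is technical polish: your $(1+\epsilon\theta_i)^{1/p}$ scaling makes the curve exactly affine in $\epsilon$, so a single two-sided ``derivative vanishes at an interior local minimum'' argument suffices, whereas the paper perturbs additively by $\pm\epsilon\theta_i X_i$, expands $(1\pm\epsilon\theta_i)^p$ to first order, and combines two one-sided directional-derivative inequalities to obtain the same equality.
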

\begin{proof}
Let $(Z^1_i,\ldots,Z^K_i) = (\theta_i X^1_i,\ldots,\theta_i X^K_i)$ for all $i \in \{1 \ldots r\}$ and let $\Lambda = \sum_{i=1}^r \theta_i \phi(X^1_i,\ldots,X^K_i)$.  From positive homogeneity and the fact that we have a local minimum, then $\exists \delta > 0$ such that $\forall \epsilon \in (0,\delta)$ we must have
\begin{align}
&f_r(X^1,\ldots,X^K,Q) \leq f_r(X^1+\epsilon Z^1,\ldots,X^K+\epsilon Z^K,Q) \implies \\
\begin{split}
\label{eq:lem_localmin_inequal}
&\ell(\Phi_r(X^1,\ldots,X^K),Q) + \lambda \sum_{i=1}^r g(X^1_i,\ldots,X^K_i) + H(Q) \leq \\
&\ell \left(\sum_{i=1}^r (1+\epsilon \theta_i)^{p} \phi(X^1_i,\ldots,X^K_i),Q \right) + \lambda \sum_{i=1}^r (1+\epsilon \theta_i)^{p} g(X^1_i,\ldots,X^K_i) + H(Q) 
\end{split}
\end{align}
Taking the first order approximation $(1+\epsilon \theta_i)^{p} = 1+p \epsilon \theta_i+ O(\epsilon^2)$ and rearranging the terms of \eqref{eq:lem_localmin_inequal}, we arrive at
\begin{align}
\begin{split}
\label{eq:pos_epsilon}
0 \leq &\ell \left(\Phi_r(X^1,\ldots,X^K) + p \epsilon \Lambda + O(\epsilon^2),Q \right) - 
\ell(\Phi_r(X^1,\ldots,X^K),Q) \\&+ p \epsilon \lambda \sum_{i=1}^r \theta_i g(X^1_i,\ldots,X^K_i) + O(\epsilon^2)
\end{split}
\end{align}
Taking $\lim_{\epsilon \searrow 0} [\tfrac{\eqref{eq:pos_epsilon}}{\epsilon}]$, we note that the difference in the $\ell(\cdot,\cdot)$ terms gives the one-sided directional derivative $d\ell(\Phi_r(X^1,\ldots,X^K),Q)(p \Lambda,0)$, thus from the differentiability of $\ell$ we get
\begin{align}
\label{eq:dot_lem_dir1}
0 \leq \left<\nabla_X \ell(\Phi_r(X^1,\ldots,X^K),Q),p \Lambda \right> + p \lambda \sum_{i=1}^r \theta_i g(X^1_i,\ldots,X^K_i)
\end{align}
Noting that for $\epsilon > 0$ but sufficiently small, we also must have $f_r(X^1,\ldots,X^K,Q) \leq f_r(X^1 - \epsilon Z^1,\ldots,X^K - \epsilon Z^K)$, using identical steps as before and taking the first order approximation $(1-\epsilon \theta_i)^{p} = 1-p \epsilon \theta_i+ O(\epsilon^2)$, we get
\begin{align}
\begin{split}
\label{eq:neg_epsilon}
0 \leq &\ell (\Phi_r(X^1,\ldots,X^K) - p \epsilon \Lambda + O(\epsilon^2),Q) - 
\ell(\Phi_r(X^1,\ldots,X^K),Q) \\ &- p \epsilon \lambda \sum_{i=1}^r \theta_i g(X^1_i,\ldots,X^K_i) + O(\epsilon^2)
\end{split}
\end{align}
and taking the limit $\lim_{\epsilon \searrow 0} [\tfrac{\eqref{eq:neg_epsilon}}{\epsilon}]$, we arrive at
\begin{align}
\label{eq:dot_lem_dir2}
0 \leq \left<\nabla_X \ell(\Phi_r(X^1,\ldots,X^K),Q), - p \Lambda \right> - p \lambda \sum_{i=1}^r \theta_i g(X^1_i,\ldots,X^K_i)
\end{align}
Combining \eqref{eq:dot_lem_dir1} and \eqref{eq:dot_lem_dir2} and rearranging terms gives the result.
\end{proof}

\subsection{Main Results}

Based on the above preliminary results, we are now ready to state our main results and several immediate corollaries.

\begin{theorem}
\label{thm:0_local_min}

Given a function $f_r(X^1,\ldots,X^K,Q)$ of the form given in \eqref{eq:factor_obj}, any local minimizer of the optimization problem 
\begin{align}
	\begin{split}
	\min_{(X^1,\ldots,X^K)_r,Q} \ &f_r(X^1,\ldots,X^K,Q) \equiv \\ &\ell(\Phi_r(X^1,\ldots,X^K),Q)+\lambda \sum_{i=1}^r g(X^1_i,\ldots,X^K_i) + H(Q)
	\end{split}
\end{align}
such that $(X^1_{i_0},\ldots,X^K_{i_0})=(0,\ldots,0)$ for some $i_0 \in \{1,\ldots,r\}$ is a global minimizer.
\end{theorem}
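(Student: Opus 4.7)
The strategy is to use the convex relaxation $F(X,Q) = \ell(X,Q) + \lambda\Omega_{\phi,g}(X) + H(Q)$ as a certificate: I will show that $(X,Q)$, with $X = \Phi_r(X^1,\ldots,X^K)$, is a global minimizer of $F$, and then transfer this back to $f_r$. The candidate dual certificate is $W \equiv -\tfrac{1}{\lambda}\nabla_X \ell(X,Q)$. By Lemma \ref{lem:Omega_opt}, to conclude that the factorization is optimal and that $W \in \partial\Omega_{\phi,g}(X)$, it suffices to check (i) the polar bound $\Omega^\circ_{\phi,g}(W) \leq 1$ and (ii) the alignment identity $\langle W, X\rangle = \sum_i g(X^1_i,\ldots,X^K_i)$.

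For the polar bound I will exploit the zero-slice hypothesis. Since $(X^1_{i_0},\ldots,X^K_{i_0}) = 0$, for any fixed $(z^1,\ldots,z^K)$ with $g(z^1,\ldots,z^K) \leq 1$, replacing the $i_0$-th slice by $(\epsilon z^1,\ldots,\epsilon z^K)$ is a genuine small perturbation of the candidate point for all sufficiently small $\epsilon>0$. By positive homogeneity of degree $p$, $\Phi_r$ picks up an increment $\epsilon^{p}\phi(z^1,\ldots,z^K)$ and the penalty picks up $\lambda\epsilon^{p}g(z^1,\ldots,z^K)$. Using local minimality, differentiability of $\ell$, and dividing by $\epsilon^{p}$ before letting $\epsilon \searrow 0$, I would obtain
\[
\langle \nabla_X \ell(X,Q),\,\phi(z^1,\ldots,z^K)\rangle + \lambda g(z^1,\ldots,z^K) \geq 0
\]
for every $(z^1,\ldots,z^K)$. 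Rearranging gives $\langle W,\phi(z^1,\ldots,z^K)\rangle \leq 1$ whenever $g(z^1,\ldots,z^K)\leq 1$, so $\Omega^\circ_{\phi,g}(W) \leq 1$.

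For the alignment identity I apply Lemma \ref{lem:theta_search} with $\theta = (1,\ldots,1)$, which yields directly $\langle W, \Phi_r(X^1,\ldots,X^K)\rangle = \sum_{i=1}^r g(X^1_i,\ldots,X^K_i)$. This is precisely condition 2 of Lemma \ref{lem:Omega_opt}, hence $(X^1,\ldots,X^K)$ is an optimal factorization of $X$ (so $\sum_i g(X^1_i,\ldots,X^K_i) = \Omega_{\phi,g}(X)$) and $W \in \partial\Omega_{\phi,g}(X)$. This is the subgradient optimality condition for $X$ in the convex problem $F$. For the variable $Q$, note that with $X$ held fixed the function $Q \mapsto \ell(X,Q) + H(Q)$ is convex, so the local minimality of $Q$ upgrades to global minimality in $Q$, giving $0 \in \nabla_Q\ell(X,Q) + \partial H(Q)$. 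Joint convexity of $F$ then yields that $(X,Q)$ is a global minimizer of $F$.

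Finally, I close the loop by comparing values. At the candidate point, $F(X,Q) = f_r(X^1,\ldots,X^K,Q)$ because the factorization is optimal. For any competitor $(\tilde X^1,\ldots,\tilde X^K,\tilde Q)$ with $\tilde X = \Phi_r(\tilde X^1,\ldots,\tilde X^K)$, the definition of $\Omega_{\phi,g}$ gives $\Omega_{\phi,g}(\tilde X) \leq \sum_i g(\tilde X^1_i,\ldots,\tilde X^K_i)$, hence $F(\tilde X,\tilde Q) \leq f_r(\tilde X^1,\ldots,\tilde X^K,\tilde Q)$; combined with global optimality of $(X,Q)$ in $F$, this yields $f_r(X^1,\ldots,X^K,Q) \leq f_r(\tilde X^1,\ldots,\tilde X^K,\tilde Q)$. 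The main obstacle in this argument is the polar bound: one has to be careful that perturbing the $i_0$-th slice genuinely stays inside the local-minimum neighborhood of the factor space (this is what the zero-slice assumption buys), and that the leading $\epsilon^{p}$ contribution is correctly isolated when $p \neq 1$. Everything downstream is bookkeeping with Lemmas \ref{lem:theta_search} and \ref{lem:Omega_opt}.
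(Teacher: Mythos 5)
Your proposal is correct and follows essentially the same route as the paper's proof: the dual certificate $W=-\tfrac{1}{\lambda}\nabla_X\ell(X,Q)$, the zero-slice perturbation to get the polar bound $\Omega^\circ_{\phi,g}(W)\leq 1$, Lemma \ref{lem:theta_search} with $\theta=(1,\ldots,1)$ for the alignment identity, Lemma \ref{lem:Omega_opt} to conclude $W\in\partial\Omega_{\phi,g}(X)$, and the bound $F\leq f_r$ to transfer global optimality back to the factorized problem. The only cosmetic difference is your $\epsilon$-scaling (perturbing by $\epsilon z$ and dividing by $\epsilon^{p}$, versus the paper's $\epsilon^{1/p}z$ and dividing by $\epsilon$), which is an equivalent reparametrization.
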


\begin{proof}{\textbf{Theorem  \ref{thm:0_local_min}}}

We begin by noting that from the definition of $\Omega_{\phi,g}(X)$, for any factorization $X=\Phi_r(X^1,\ldots,X^K)$ 
\begin{equation}
	\label{eq:fact_upper_bound}
	\begin{split}
	&F(X,Q) = \ell(X,Q)+\lambda \Omega_{\phi,g}(X)+H(Q) \leq \\
	&\ell(\Phi_r(X^1,\ldots,X^K),Q)+\lambda \sum_{i=1}^r g(X^1_i,\ldots,X^K_i) + H(Q) = f_r(X^1,\ldots,X^K,Q)
	\end{split}
\end{equation}
with equality at any factorization which achieves the infimum in \eqref{eq:Omega_def}.  We will show that a local minimum of $f_r(X^1,\ldots,X^K,Q)$ satisfying the conditions of the theorem also satisfies the conditions for $(\Phi_r(X^1,\ldots,X^K),Q)$ to be a global minimum of the convex function $F(X,Q)$, which implies a global minimum of $f_r(X^1,\ldots,X^K,Q)$ due to the global bound in \eqref{eq:fact_upper_bound}.

First, because \eqref{eq:convex_obj} is a convex function, a simple subgradient condition gives that $(X,Q)$ is a global minimum of $F(X,Q)$ iff the following two conditions are satisfied
\begin{align}
	\label{eq:subgrad_cond1}
	-\tfrac{1}{\lambda} \nabla_X \ell(X,Q) &\in \partial \Omega_{\phi,g}(X) \\
	\label{eq:subgrad_cond2}
	-\nabla_Q \ell(X,Q) &\in \partial H(Q)
\end{align}
where $\nabla_X \ell(X,Q)$ and $\nabla_Q \ell(X,Q)$ denote the portions of the gradient of $\ell(X,Q)$ corresponding to $X$ and $Q$, respectively.  If $(X^1,\ldots,X^K,Q)$ is a local minimum of $f_r(X^1,\ldots,X^K,Q)$, then \eqref{eq:subgrad_cond2} must be satisfied at $(X,Q) = (\Phi_r(X^1,\ldots,X^K),Q)$, as this is implied by the first order optimality condition for a local minimum \citep[Chap. 10]{Rockafellar:1998}, so we are left to show that \eqref{eq:subgrad_cond1} is also satisfied.

Turning to the factorization objective, if $(X^1,\ldots,X^K,Q)$ is a local minimum of $f_r(X^1,\ldots,X^K,Q)$, then $\forall (Z^1,\ldots,Z^K)_r$ there exists $\delta>0$ such that $\forall \epsilon \in (0,\delta)$ we have $f_r(X^1+\epsilon^{1/p} Z^1,\ldots,X^K+\epsilon^{1/p} Z^K,Q) \geq f_r(X^1,\ldots,X^K,Q)$.  If we now consider search directions $(Z^1,\ldots,Z^K)_r$ of the form
\begin{equation}
	\label{eq:z_form}
	(Z^1_j,\ldots,Z^K_j) = \left\{ \begin{array}{cc} (0,\ldots,0) & j \neq i_0 \\ (z^1,\ldots,z^K) & j = i_0 \end{array} \right.
\end{equation}
where $i_0$ is the index such that $(X^1_{i_0},\ldots,X^K_{i_0})=(0,\ldots,0)$, then for $\epsilon \in (0,\delta)$, we have
\begin{align}
	&\ell(\Phi_r(X^1,\ldots,X^K),Q) + \lambda \sum_{i=1}^r g(X^1_i,\ldots,X^K_i) + H(Q) \leq \\
	\begin{split}
		\label{eq:z_search_1}
		&\ell(\Phi_r(X^1 + \epsilon^{1/p} Z^1,\ldots,X^K+\epsilon^{1/p} Z^K),Q) + \\ 
		&\lambda \sum_{i=1}^r g(X^1_i+\epsilon^{1/p} Z_i^1,\ldots,X_i^K+\epsilon^{1/p} Z_i^K) + H(Q) = 
	\end{split} \\ 
	\begin{split}
		\label{eq:z_search_2}
		&\ell(\Phi_r(X^1,\ldots,X^K)+\epsilon \phi(z^1,\ldots,z^K),Q)+ \\
		&\lambda \sum_{i=1}^r g(X^1_i,\ldots,X^K_i) + \epsilon \lambda g(z^1,\ldots,z^K) + H(Q).
	\end{split}
\end{align}
The equality between \eqref{eq:z_search_1} and \eqref{eq:z_search_2} comes from the special form of $Z$ given by \eqref{eq:z_form} and the positive homogeneity of $\phi$ and $g$. Rearranging terms, we now have
\begin{equation}
	\label{eq:lim_frac1}
	\begin{split}
	\epsilon^{-1} &[ \ell(\Phi_r(X^1,\ldots,X^K)+\epsilon \phi(z^1,\ldots,z^K),Q) -\ell(\Phi_r(X^1,\ldots,X^K),Q) ] \geq \\
	&-\lambda g(z^1,\ldots,z^K).
	\end{split}
\end{equation}
Taking the limit $\lim \epsilon \searrow 0$ of \eqref{eq:lim_frac1}, we note that the left side of the inequality is simply the definition of the one-sided directional derivative of $\ell(\Phi_r(X^1,\ldots,X^K),Q)$ in the direction $(\phi(z^1,\ldots,z^K),0)$, which combined with the differentiability of $\ell(X,Q)$, gives
\begin{align}
	\left<\phi(z^1,\ldots,z^K),\nabla_X \ell(\Phi_r(X^1,\ldots,X^K),Q) \right> \geq -\lambda g(z^1,\ldots,z^K).
\end{align}
Because $(z^1,\ldots,z^K)$ was arbitrary, we have established that
\begin{equation}
	\label{eq:dot_lessthan}
	\begin{split}
	\left<\phi(z^1,\ldots,z^K),-\tfrac{1}{\lambda} \nabla_X \ell(\Phi_r(X^1,\ldots,X^K),Q)\right> &\leq g(z^1,\ldots,z^K) \ \ \forall(z^1,\ldots,z^K) \iff \\
	\Omega_{\phi,g}^\circ(-\tfrac{1}{\lambda} \nabla_X \ell(\Phi_r(X^1,\ldots,X^K),Q)) &\leq 1
	\end{split}
\end{equation}

Further, if we choose $\theta$ to be vector of all ones in Lemma \ref{lem:theta_search}, we get
\begin{align}
	\label{eq:dot_equal}
	\sum_{i=1}^r g(X^1_i,\ldots,X^K_i) = \left<\Phi_r(X^1,\ldots,X^K),-\tfrac{1}{\lambda}\nabla_X \ell(\Phi_r(X^1,\ldots,X^K),Q) \right>
\end{align}
which, combined with \eqref{eq:dot_lessthan} and Lemma \ref{lem:Omega_opt}, shows that $-\tfrac{1}{\lambda} \nabla_X \ell(\Phi_r(X^1,\ldots,X^K),Q) \in \partial \Omega_{\phi,g}(\Phi_r(X^1,\ldots,X^K))$, completing the result.
\end{proof}

From this result, we can then test the global optimality of any local minimum (regardless of whether it has an all-zero slice or not) from the immediate corollary: 

\begin{corollary}
\label{corr:r1}
Given a function $f_r(X^1,\ldots,X^K,Q)$ of the form given in \eqref{eq:factor_obj}, any local minimizer of the optimization problem
\begin{align}
	&\min_{(X^1,\ldots,X^K)_r,Q} f_r(X^1,\ldots,X^K,Q) 
\end{align}
is a global minimizer if $f_{r+1}([X^1 \ 0],\ldots,[X^K \ 0],Q)$ is a local minimizer of $f_{r+1}$.
\end{corollary}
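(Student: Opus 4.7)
The plan is to reduce the corollary to a direct application of Theorem~\ref{thm:0_local_min} applied not to $f_r$ but to $f_{r+1}$ at the padded factorization. The key observation is that padding a size-$r$ factorization by an all-zero slice produces a size-$(r+1)$ factorization with exactly the same objective value, and moreover the padded point automatically satisfies the ``has an all-zero slice'' hypothesis required by Theorem~\ref{thm:0_local_min}.

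First I would verify the value-preservation identity
\[
f_{r+1}([X^1\ 0],\ldots,[X^K\ 0],Q) = f_r(X^1,\ldots,X^K,Q).
\]
This uses two ingredients from the preliminaries. Since $\phi$ is positively homogeneous with degree $p\neq 0$, we have $\phi(0,\ldots,0)=0$, so $\Phi_{r+1}([X^1\ 0],\ldots,[X^K\ 0]) = \Phi_r(X^1,\ldots,X^K)$ by the definition \eqref{eq:Phi_r_def}. Similarly, since $g$ is positive semidefinite, $g(0,\ldots,0)=0$, so the regularization sum over the $r+1$ slices equals the regularization sum over the first $r$ slices. Both identities combined give the equality of $f_{r+1}$ and $f_r$ values.

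Next I would invoke Theorem~\ref{thm:0_local_min}. By the hypothesis of the corollary, $([X^1\ 0],\ldots,[X^K\ 0],Q)$ is a local minimizer of $f_{r+1}$; its $(r+1)$-th slice is identically zero, so it satisfies the zero-slice condition with $i_0=r+1$. Theorem~\ref{thm:0_local_min} then yields that it is a \emph{global} minimizer of $f_{r+1}$, i.e.\ $f_{r+1}([X^1\ 0],\ldots,[X^K\ 0],Q) = \min_{(Y^1,\ldots,Y^K)_{r+1},\tilde Q} f_{r+1}(Y^1,\ldots,Y^K,\tilde Q)$.

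Finally I would close the argument with the trivial inequality $\min f_{r+1}\leq \min f_r$: any size-$r$ candidate can be zero-padded to a size-$(r+1)$ candidate with identical objective value by the same value-preservation identity. Chaining everything together,
\[
f_r(X^1,\ldots,X^K,Q) = f_{r+1}([X^1\ 0],\ldots,[X^K\ 0],Q) = \min f_{r+1} \leq \min f_r,
\]
which forces $f_r(X^1,\ldots,X^K,Q) = \min f_r$, establishing global optimality. There is no real obstacle here beyond the bookkeeping of the padding identity; the corollary is essentially a restatement of Theorem~\ref{thm:0_local_min} applied one level up, exploiting the fact that we can always manufacture an all-zero slice by enlarging $r$ by one without changing the objective.
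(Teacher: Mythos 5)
Your proof is correct and matches the paper's intended reasoning: the paper states this as an immediate corollary of Theorem~\ref{thm:0_local_min}, relying exactly on the fact that zero-padding preserves the objective value (since $\phi(0,\ldots,0)=0$ and $g(0,\ldots,0)=0$) and manufactures the required all-zero slice, after which the chain $f_r = f_{r+1}(\text{padded}) = \min f_{r+1} \leq \min f_r$ closes the argument. Nothing is missing.
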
 

From the results of Theorem \ref{thm:0_local_min}, we are now also able to show that if we let the size of the factorized variables ($r$) become large enough, then from any initialization we can always find a global minimizer of $f_r(X^1,\ldots,X^K,Q)$ using a purely local descent strategy.  Specifically, we have the following result.


\begin{theorem}
\label{thm:non_inc_path}
Given a function $f_r(X^1,\ldots,X^K,Q)$ as defined by \eqref{eq:factor_obj}, if $r > \card(D)$ then from any point $(Z^1,\ldots,Z^K,Q)$ such that $f_r(Z^1,\ldots,Z^K,Q) < \infty$ there must exist a non-increasing path from $(Z^1,\ldots,Z^K,Q)$ to a global minimizer of $f_r(X^1,\ldots,X^K,Q)$.
\end{theorem}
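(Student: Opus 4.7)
The plan is to construct an explicit non-increasing path from any feasible $(Z^1,\ldots,Z^K,Q)$ to a global minimizer of $f_r$ by exploiting that $r > \card(D)$ provides at least one spare slot beyond what is needed to represent a minimum of the convex surrogate $F$.

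\textbf{Step 1 (locate the target).} Assumption 4 applied to the convex function $F$ of \eqref{eq:convex_obj} yields $(X^\star,Q^\star)\in\argmin F$. Proposition \ref{prop:Omega_properties} part 5 then produces $(\tilde X^1,\ldots,\tilde X^K)_{\hat r}$ with $\hat r\le \card(D)<r$, $\Phi_{\hat r}(\tilde X^1,\ldots,\tilde X^K)=X^\star$, and $\sum_{i=1}^{\hat r} g(\tilde X^1_i,\ldots,\tilde X^K_i)=\Omega_{\phi,g}(X^\star)$. Zero-padding along the last dimension to size $r$ gives a factorization $(\bar X^1,\ldots,\bar X^K)_r$ with $f_r(\bar X,Q^\star)=F(X^\star,Q^\star)$. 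The universal bound $F(\Phi_r(Y),Q)\le f_r(Y,Q)$ established in the proof of Theorem \ref{thm:0_local_min} then shows that this is a global minimum of $f_r$.

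\textbf{Step 2 (augmented interpolation).} By Proposition \ref{prop:Phi_r_prop} and the positive homogeneity of $g$, the size-$(r+\hat r)$ family
\[
\hat Y^k(t)=[(1-t)^{1/p}Z^k_1,\ldots,(1-t)^{1/p}Z^k_r,\;t^{1/p}\tilde X^k_1,\ldots,t^{1/p}\tilde X^k_{\hat r}],\qquad \hat Q(t)=(1-t)Q+tQ^\star
\]
satisfies $\Phi_{r+\hat r}(\hat Y(t))=(1-t)\Phi_r(Z)+tX^\star$ and $\sum_i g(\hat Y(t)_i)=(1-t)\sum_i g(Z_i)+t\sum_i g(\tilde X_i)$. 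Joint convexity of $\ell$ and convexity of $H$ then give
\[
f_{r+\hat r}(\hat Y(t),\hat Q(t))\le (1-t)f_r(Z,Q)+t\,F(X^\star,Q^\star)\le f_r(Z,Q),
\]
so $f_{r+\hat r}$ is non-increasing along this path and equals $f_r(\bar X,Q^\star)$ at $t=1$.

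\textbf{Step 3 (fit into $r$ slots).} The augmented path of Step 2 uses $\hat r$ slots more than we are allowed. To realize it inside the size-$r$ space, I would split $[0,1]$ into $\hat r$ consecutive sub-intervals and, during the $j$-th sub-interval parametrized by $s\in[0,1]$, grow only the single new column $s^{1/p}\tilde X_j$ while scaling the block of remaining slots by $(1-s)^{1/p}$. The condition $r>\card(D)$ (stronger than the bare $r\ge \hat r$) is what guarantees $r-\hat r\ge 1$, so throughout the sweep there is always a free ``bridge'' slot available to absorb the next incoming column; this bridge is recycled between sub-intervals by invoking positive homogeneity to uniformly shrink the old-column block into one representative slot. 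On each sub-interval the maps $s\mapsto \Phi_r$ and $s\mapsto \sum g$ are affine, so the same convexity estimate as in Step 2 shows that $f_r$ is non-increasing on each sub-interval. Gluing the $\hat r$ sub-intervals end-to-end yields a continuous non-increasing path in size-$r$ factorization space from $(Z,Q)$ to $(\bar X,Q^\star)$, completing the argument.

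\textbf{Main obstacle.} The delicate point is Step 3, since $\phi$ is not additive within a single slot and hence one cannot blend a shrinking $Z$-column and a growing $\tilde X$-column into one slot; the interpolation must be carried out across slots. The assumption $r>\card(D)$ is precisely what is needed to ensure a persistent free bridge slot throughout the entire $\hat r$-stage sweep, making the size-$r$ realization possible without ever requiring more than $r$ slots at any time.
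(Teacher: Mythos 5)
Your Steps 1--2 are fine (Step 2's monotonicity claim needs a slightly stronger argument than the chord bound you give --- along your affine segment the objective is a convex function of $t$ whose right endpoint value $F(X^\star,Q^\star)$ is a lower bound for the whole segment, and that is what forces non-increase --- but this is fixable). The genuine gap is Step 3, which is exactly the step you flag as the obstacle and which your construction does not actually resolve. Your plan to ``recycle'' the bridge slot ``by invoking positive homogeneity to uniformly shrink the old-column block into one representative slot'' is not a legal move: $\phi$ is only positively homogeneous, not additive, so a sum $\sum_i \phi(W_i)$ spread over several slots cannot in general be re-expressed as $\phi(w)$ in a single slot. Without that merge, after the first stage the spare slot is permanently occupied by $\tilde X_1$, and the only way to free a slot for $\tilde X_2$ is to drive some $Z$-column exactly to zero; doing so changes $\Phi_r$ (its contribution cannot be absorbed by the other slots), and nothing in your construction guarantees the objective does not increase during that removal. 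Starting from an arbitrary feasible point there is no identity available that lets you zero out a slice for free, so the staged sweep with a single spare slot, as described, does not yield a non-increasing size-$r$ path.

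This is precisely where the paper takes a different route: it first follows local descent to a local minimum $(\tilde X^1,\ldots,\tilde X^K,\tilde Q)$; if some slice is zero, Theorem \ref{thm:0_local_min} finishes. Otherwise, since $r>\card(D)$ the outputs $\{\phi(\tilde X^1_i,\ldots,\tilde X^K_i)\}_{i=1}^r$ are linearly dependent, giving $\hat\theta\neq 0$ with $\sum_i\hat\theta_i\phi(\tilde X^1_i,\ldots,\tilde X^K_i)=0$, and Lemma \ref{lem:theta_search} (which is valid only at a local minimum --- this is why the descent-first step matters) gives $\sum_i\hat\theta_i\, g(\tilde X^1_i,\ldots,\tilde X^K_i)=0$. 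Scaling the slices by $(1+\gamma\hat\theta_i)^{1/p}$ then traverses a path of \emph{constant} objective value that terminates with an all-zero slice, after which Theorem \ref{thm:0_local_min} (or further descent, iterated) applies. In other words, the paper frees a slot at zero cost using a level-set move certified by first-order optimality, whereas your interpolation-to-the-convex-minimizer scheme needs roughly ``current support plus $\hat r$'' slots and cannot be compressed into $r$ slots by the bookkeeping you propose. To repair your argument you would either need $r$ large enough to hold both factorizations simultaneously, or you would need to import the paper's descend-then-slide mechanism, at which point you have reproduced its proof.
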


\begin{proof} \textbf{Theorem \ref{thm:non_inc_path}}

Clearly if $(Z^1,\ldots,Z^K,Q)$ is not a local minimum, then we can follow a decreasing path until we reach a local minimum.  Having arrived at a local minimum, $(\tilde{X}^1,\ldots,\tilde{X}^K,\tilde{Q})$, if $(\tilde{X}^1_i,\ldots,\tilde{X}^K_i)=(0,\ldots,0)$ for any $i \in \{1,\ldots,r\}$ then from Theorem \ref{thm:0_local_min} we must be at a global minimum.  We are left to show that a non-increasing path to a global minimizer must exist from any local minima such that $(\tilde{X}^1_i,\ldots,\tilde{X}^K_i) \neq (0,\ldots,0)$ for all $i \in \{1,\ldots,r\}$.

Let us define the set $S = \{ \sum_{i=1}^r \theta_i \phi(\tilde{X}^1_i,\ldots,\tilde{X}^K_i) : \theta \in \Re^r \}$.  Because $r > \card(D)$ there must exist $\hat{\theta} \in \Re^r$ such that $\hat{\theta} \neq 0$ and $\sum_{i=1}^r \hat{\theta_i} \phi(\tilde{X}^1_i,\ldots,\tilde{X}^K_i) = 0$.  Further, from Lemma \ref{lem:theta_search} we must have that $\sum_{i=1}^r \hat{\theta}_i g(\tilde{X}^1_i,\ldots,\tilde{X}^K_i) = \left<-\tfrac{1}{\lambda}\nabla_X \ell(\Phi_r(X^1,\ldots,X^K),Q), \sum_{i=1}^r \hat{\theta_i} \phi(X^1_i,\ldots,X^K_i) \right> = 0 $.  Because $g(\tilde{X}^1_i,\ldots,\tilde{X}^K_i) > 0, \ \ \forall i \in \{1,\ldots,r\}$ this implies that at least one entry of $\hat{\theta}$ must be strictly less than zero.

Without loss of generality, scale $\hat{\theta}$ so that $\min_i \hat{\theta}_i = -1$.  Now, for all $(\gamma,i) \in \{ [0,1] \} \times \{1,\ldots,r\}$, let us define
\begin{equation}
	(R^1_i(\gamma),\ldots,R^K_i(\gamma)) \equiv ((1+\gamma \hat{\theta_i})^{1/p} \tilde{X}^1_i,\ldots,(1+\gamma \hat{\theta_i})^{1/p} \tilde{X}^K_i)
\end{equation}
where $p$ is the degree of positive homogeneity of $(\phi,g)$. Note that by construction $(R^1(0),\ldots,R^K(0)) = (\tilde{X}^1,\ldots,\tilde{X}^K)$ and that for $\gamma=1$ there must exist $i_0 \in \{1,\ldots,r\}$ such that $(R^1_{i_0}(1),\ldots,R^K_{i_0}(1)) = (0,\ldots,0)$.

Further, from the positive homogeneity of $(\phi,g)$ we have $\forall \gamma \in [0,1]$
\begin{align}
	\begin{split}
	\label{eq:RX_path1}
	f_r(R^1(\gamma),\ldots,R^K(\gamma),Q) =& 
	\ell \left(\sum_{i=1}^r \phi(X_i^1,\ldots,X_i^K) + \gamma \sum_{i=1}^r \hat{\theta}_i \phi(X^1_i,\ldots,X^K_i),\, Q \right) +\\
	&\lambda \gamma \sum_{i=1}^r \hat{\theta}_i g(X^1_i,\ldots,X^K_i) + \lambda \sum_{i=1}^r g(X^1_i,\ldots,X^K_i) + H(Q)
	\end{split} \\
	\label{eq:RX_path2}
	=& \ell(\Phi_r(X^1,\ldots,X^K),Q) + \lambda \sum_{i=1}^r g(X^1_i,\ldots,X^K_i) + H(Q) \\
	=& f_r(X^1,\ldots,X^K,Q) 
\end{align}
where the equality between \eqref{eq:RX_path1} and \eqref{eq:RX_path2} is seen by recalling that $\sum_{i=1}^r \hat{\theta}_i \phi(X^1_i,\ldots,X^K_i) = 0$ and $\sum_{i=1}^r \hat{\theta}_i g(X^1_i,\ldots,X^K_i) =0 $.

As a result, as $\gamma$ goes from $0 \rightarrow 1$ we can traverse a path from $(X^1,\ldots,X^K,Q) \rightarrow (R^1(1),\ldots,R^K(1),Q)$ without changing the value of $f_r$.  Also recall that by construction $(R^1_{i_0}(1),\ldots,R^K_{i_0}(1)) = (0,\ldots,0)$, so if $(R^1(1),\ldots,R^K(1),Q)$ is a local minimizer of $f_r$ then it must be a global minimizer due to Theorem \ref{thm:0_local_min}.  If $(R^1(1),\ldots,R^K(1),Q)$ is not a local minimizer then there must exist a descent direction and we can iteratively apply this result until we reach a global minimizer, completing the proof.
\end{proof}

We note that from this proof we have also described a meta-algorithm (outlined in Algorithm \ref{algorithm}) which can be used with any local-descent optimization strategy to guarantee convergence to a global minimum.  While in general the size of the factorization ($r$) might increase as the algorithm proceeds, as a worst case, it is guaranteed that a global minimum can be found with a finite $r$ never growing larger than $\card(D)+1$.  Also note that this is a worst case upper bound on $r$ for the most general form of our framework and that for specific choices of $\phi$ and $g$ the bound on the maximum $r$ required can be significantly lowered.

\begin{algorithm}[tb]
   \caption{\bf (Local Descent Meta-Algorithm)}
   \label{algorithm}
\begin{algorithmic}
	 \INPUT{p - Degree of positive homogeneity for $(\phi,g)$}
	 \INPUT{$\{(X^1,\ldots,X^K)_r, \ Q \}$ - Initialization for variables}
	 \WHILE{Not Converged}
		\STATE{Perform local descent on variables $\{(X^1,\ldots,X^K),Q\}$ until arriving at a local minimum $\{ (\tilde{X}^1,\ldots,\tilde{X}^K),\tilde{Q} \}$}
		\IF{$\exists i_0 \in \{1,\ldots,r\}$ such that $(\tilde{X}^1_{i_0},\ldots,\tilde{X}^K_{i_0})=(0,\ldots,0)$}
			\STATE{$\{(\tilde{X}^1,\ldots,\tilde{X}^K),\tilde{Q} \}$ is a global minimum. Return.}
		\ELSE
			\IF {$\exists \theta \in \Re^r \backslash 0$ such that $\sum_{i=1}^r \theta_i \phi(X^1_i,\ldots,X^K_i) = 0$}
				\STATE{Scale $\theta$ so that $\min_i \theta_i = -1$}
				\STATE{Set $(X^1_i,\ldots,X^K_i) = ((1-\theta_i)^{1/p} \tilde{X}^1_i,\ldots,(1-\theta_i)^{1/p}\tilde{X}^K_i), \ \forall i \in \{1, \ldots, r\}$}
			\ELSE
				\STATE{Increase size of factorized variables by appending an all zero slice}
				\STATE{$(X^1,\ldots,X^K)_{r+1} = ([\tilde{X}^1 \ 0],\ldots,[\tilde{X}^K \ 0])$}
			\ENDIF
			\STATE{Set $Q=\tilde{Q}$}
			\STATE{Continue loop}
		\ENDIF
	 \ENDWHILE
\end{algorithmic}
\end{algorithm}

\begin{corollary}
	Algorithm \ref{algorithm} will find a global minimum of $f_r(X^1,\ldots,X^K,Q)$ as defined in \eqref{eq:factor_obj}.  If $r$ is intialized to be greater than $\card(D)$, then the size of the factorized variables will not increase.  Otherwise, the algorithm will terminate with $r \leq \card(D)+1$.
\end{corollary}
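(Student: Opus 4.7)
The plan is to piece together Theorems \ref{thm:0_local_min} and \ref{thm:non_inc_path} with a pure dimension-counting argument. The only way Algorithm \ref{algorithm} can exit its \textbf{while} loop is through the ``$\exists i_0$ with $(\tilde{X}^1_{i_0},\ldots,\tilde{X}^K_{i_0})=(0,\ldots,0)$'' branch, and Theorem \ref{thm:0_local_min} immediately certifies any such local minimum as a global minimizer of $f_r$. So for correctness I only need to check that the two remaining branches never increase $f$. The rescale branch is precisely the reparameterization $(R^1(\gamma),\ldots,R^K(\gamma))$ evaluated at $\gamma=1$ in the proof of Theorem \ref{thm:non_inc_path}, whose objective is constant in $\gamma$ because $\sum_i \hat{\theta}_i \phi(\tilde{X}^1_i,\ldots,\tilde{X}^K_i)=0$ by construction and $\sum_i \hat{\theta}_i g(\tilde{X}^1_i,\ldots,\tilde{X}^K_i)=0$ by Lemma \ref{lem:theta_search}; the append branch leaves $f_{r+1}$ unchanged because positive homogeneity with $p\neq 0$ forces $\phi(0,\ldots,0)=0$ and $g(0,\ldots,0)=0$.

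For the size bound I would use the observation that a family $\{\phi(\tilde{X}^1_i,\ldots,\tilde{X}^K_i)\}_{i=1}^r \subset \Re^D$ can be linearly independent only when $r \leq \card(D)$. The append-zero-slice branch is reached exactly when no nonzero $\theta$ satisfies $\sum_i \theta_i \phi(\tilde{X}^1_i,\ldots,\tilde{X}^K_i)=0$, i.e., exactly when those vectors are linearly independent, so at the moment of appending $r \leq \card(D)$ and after appending $r \leq \card(D)+1$. If on the other hand $r > \card(D)$ already at initialization, the same dimension count guarantees that a nonzero $\hat{\theta}$ in the kernel always exists, so the append branch is never taken and $r$ stays fixed.

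The step I expect to require the most care is termination, since the rescale branch is objective-preserving and thus the outer loop does not strictly decrease $f$ on every iteration. I would handle this by noting that after either the rescale or the append branch the next iterate already contains a zero slice, so the subsequent local-descent call either returns that point unchanged (which triggers immediate termination through the zero-slice branch) or strictly decreases $f$. Combined with the fact that the append branch can fire at most $\card(D)+1-r_{\mathrm{init}}$ times, this rules out non-termination, and Theorem \ref{thm:non_inc_path} guarantees that the returned point is a global minimum of $f_r$.
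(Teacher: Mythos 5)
Your argument is essentially the paper's own: the corollary is given there without a separate proof, as an immediate byproduct of Theorem \ref{thm:0_local_min} and the construction in the proof of Theorem \ref{thm:non_inc_path}, and you reassemble exactly those pieces. The zero-slice exit is certified global by Theorem \ref{thm:0_local_min}; the rescale branch is the path $R(\gamma)$ at $\gamma=1$, objective-preserving because $\sum_i \hat{\theta}_i \phi(\tilde{X}^1_i,\ldots,\tilde{X}^K_i)=0$ by the branch condition and $\sum_i \hat{\theta}_i g(\tilde{X}^1_i,\ldots,\tilde{X}^K_i)=0$ by Lemma \ref{lem:theta_search}; the append branch is free since positive homogeneity with $p\neq 0$ gives $\phi(0,\ldots,0)=0$ and $g(0,\ldots,0)=0$; and both claims about $r$ follow from your correct observation that the append branch fires exactly when the vectors $\phi(\tilde{X}^1_i,\ldots,\tilde{X}^K_i)\in\Re^D$ are linearly independent, which forces $r\leq\card(D)$ at that moment and is impossible once $r>\card(D)$. (You also implicitly read the rescale step as $(1+\gamma\hat{\theta}_i)^{1/p}$ at $\gamma=1$, which is the intended reading; the $(1-\theta_i)^{1/p}$ with $\min_i\theta_i=-1$ in the algorithm listing is a sign slip, since only that reading zeroes out the slice $i_0$.)

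The one place you overreach is the termination claim, which you yourself flag as the delicate step. ``Each non-terminating iteration strictly decreases $f$'' together with ``the append branch fires at most $\card(D)+1-r_{\mathrm{init}}$ times'' does not rule out non-termination: nothing in the argument excludes an infinite sequence of rescale-then-descend cycles whose strict decreases shrink toward zero, so the outer loop could in principle run forever even though the objective values are strictly decreasing and $r$ is eventually fixed. To be fair, the paper is no more rigorous on this point --- the proof of Theorem \ref{thm:non_inc_path} ends with the same ``iteratively apply this result until we reach a global minimizer,'' and the real content of the corollary, which you do establish, is that the algorithm only ever returns global minimizers, moves along a non-increasing path, and never needs $r$ larger than $\card(D)+1$. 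But as written your proof asserts that non-termination is ``ruled out'' by an argument that does not rule it out; either present this step at the paper's level of idealization (every move is non-increasing and any exit point is a global minimum) or note explicitly that finite termination of the outer loop is an additional assumption on the local-descent oracle rather than a consequence of the counting argument.
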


\section{Discussion and Conclusions}

We begin the discussion of our results with a cautionary note; namely, these results can be challenging to apply in practice.  In particular, many algorithms based on alternating minimization can typically only guarantee convergence to a critical point, and with the inherent non-convexity of the problem, verifying whether a given critical point is also a local minima can be a challenging problem on its own.  Nevertheless, we emphasize that our results guarantee that global minimizers can be found from purely local descent if the optimization problem falls within the general framework we have described here.  As a result, even if the particular local descent strategy one chooses for a specific problem does not come with guaranteed convergence to a local minimum, the scope of the problem is still vastly reduced from a full global optimization.  There is no need, in theory, to consider multiple initializations or more complicated (and much larger scale) techniques to explore the entire search space. 

\subsection{Balanced Degrees of Homogeneity}

In addition to the above points, our analysis analysis also provides a few insights into the behavior of factorization problems and offers simple guidance on the design of such problems.  The first is that balancing the degree of positive homogeneity between the regularization function and the mapping function is crucial.  Here we have analyzed a mapping $\Phi$ with the particular form given in \eqref{eq:Phi_r_def}.  We conjecture our results can likely be generalized to include additional factorization mappings (which we save for future work), but even for more general mappings and regularization functions, requiring the degrees of positive homogeneity to match between the regularization function and the mapping function will be critical to showing results similar to those we present here.  In general, if the degrees of positive homogeneity do not match between the factorization mapping and the regularization function, then it either becomes impossible to make guarantees regarding the global optimality of a local minimum, or the regularization function does nothing to limit the size of the factorization, so the degrees of freedom in the model are largely determined by the user defined choice of $r$.  As a demonstration of these phenomena, first consider the case where we have a general mapping $\Phi(X^1,\ldots,X^K)$ which is positively homogeneous with degree $p$ (but which is not assumed to have form \eqref{eq:Phi_r_def}).  Now, consider a general regularization function $G(X^1,\ldots,X^K)$ which is positively homogeneous with degree $p' < p$, then the following proposition provides a simple counter-example demonstrating that in general it is not possible to guarantee that a global minimum can be found from local descent.

\begin{proposition}
\label{prop:pos_degree_match}
Let $\ell : \Re^D \rightarrow \Re$ be a convex function with $\partial \ell(0) \neq \emptyset$; let $\Phi : \Re^{D^1} \times \ldots \times \Re^{D^K} \rightarrow \Re^D$ be a positively homogeneous mapping with degree $p$; and let $G : \Re^{D^1} \times \ldots \times \Re^{D^K} \rightarrow \Re_+$ be a positively homogeneous function with degree $p'<p$ such that $G(0,\ldots,0)=0$ and $G(X^1,\ldots,X^K)>0 \ \ \forall \{(X^1,\ldots,X^K) : \Phi(X^1,\ldots,X^K) \neq 0 \}$.  Then, the optimization problem given by
\begin{equation}
\min_{(X^1,\ldots,X^K)} \tilde{f}(X^1,\ldots,X^K) = \ell(\Phi(X^1,\ldots,X^K)) + G(X^1,\ldots,X^K)
\end{equation}
will always have a local minimum at $(X^1,\ldots,X^K) = (0,\ldots,0)$.  Additionally, $\forall (X^1,\ldots,X^K)$ such that $\Phi(X^1,\ldots,X^K) \neq 0$ there exists a neighborhood such that $\tilde{f}(\epsilon X^1,\ldots,\epsilon X^K) > \tilde{f}(0,\ldots,0)$ for $\epsilon > 0$ and sufficiently small.
\end{proposition}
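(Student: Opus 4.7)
The plan is to obtain a single first-order lower bound on $\tilde{f}$ near the origin and then exploit the strict gap $p - p' > 0$ between the degrees of homogeneity of $\Phi$ and $G$. Since both $\Phi$ and $G$ are positively homogeneous with degree $\neq 0$, we immediately get $\Phi(0,\ldots,0)=0$ and $G(0,\ldots,0)=0$, so $\tilde{f}(0,\ldots,0) = \ell(0)$.

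Next, I would fix an arbitrary direction $(X^1,\ldots,X^K)$ and $\epsilon>0$, and use positive homogeneity to rewrite $\Phi(\epsilon X^1,\ldots,\epsilon X^K) = \epsilon^p \Phi(X^1,\ldots,X^K)$ and $G(\epsilon X^1,\ldots,\epsilon X^K) = \epsilon^{p'} G(X^1,\ldots,X^K)$. Picking any $w \in \partial \ell(0)$, which is nonempty by hypothesis, convexity of $\ell$ yields $\ell(\epsilon^p \Phi(X^1,\ldots,X^K)) \geq \ell(0) + \epsilon^p \langle w, \Phi(X^1,\ldots,X^K)\rangle$. Combining these observations produces the core estimate
\begin{equation*}
\tilde{f}(\epsilon X^1,\ldots,\epsilon X^K) - \tilde{f}(0,\ldots,0) \;\geq\; \epsilon^{p'}\bigl[ G(X^1,\ldots,X^K) + \epsilon^{p-p'} \langle w, \Phi(X^1,\ldots,X^K)\rangle \bigr].
\end{equation*}

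I would then split into two cases. If $\Phi(X^1,\ldots,X^K) = 0$, the bracket equals $G(X^1,\ldots,X^K) \geq 0$, and the inequality $\tilde{f}(\epsilon X) \geq \tilde{f}(0)$ holds for every $\epsilon \geq 0$. If $\Phi(X^1,\ldots,X^K) \neq 0$, then by hypothesis $G(X^1,\ldots,X^K) > 0$, and since $p > p'$ the quantity $\epsilon^{p-p'} \langle w, \Phi(X^1,\ldots,X^K)\rangle$ tends to $0$ as $\epsilon \searrow 0$. Hence for $\epsilon$ small enough the bracket exceeds $G(X^1,\ldots,X^K)/2>0$, yielding the strict inequality claimed in the second assertion. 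Taken together, along every ray through the origin $\tilde{f}$ is nondecreasing for sufficiently small $\epsilon$, which gives the local minimum statement, and on rays where $\Phi$ does not vanish the inequality is strict.

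The main conceptual obstacle is that the first-order convex bound supplied by $\ell$ contributes a term of order $\epsilon^p$ whose sign is uncontrolled, whereas the regularizer $G$ contributes a nonnegative term of strictly lower order $\epsilon^{p'}$. The entire argument hinges on the fact that once $p' < p$, the $G$ term dominates as $\epsilon\searrow 0$; this is precisely the mechanism that makes balancing the degrees of homogeneity in the paper's main results indispensable, and suggests why without such balance a zero initialization acts as a trap for local descent.
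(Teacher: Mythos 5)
Your proof is correct and follows essentially the same route as the paper's: both lower-bound $\ell(\epsilon^p\Phi(X^1,\ldots,X^K))-\ell(0)$ via a subgradient $w\in\partial\ell(0)$, use positive homogeneity to reduce the comparison to $\epsilon^{p}\left<w,\Phi(X^1,\ldots,X^K)\right>+\epsilon^{p'}G(X^1,\ldots,X^K)$, and conclude from $p>p'$ that the nonnegative $G$ term dominates as $\epsilon\searrow 0$, with strictness exactly when $\Phi(X^1,\ldots,X^K)\neq 0$. Your explicit factoring out of $\epsilon^{p'}$ and the two-case split is just a slightly more detailed presentation of the same argument (and, like the paper, it establishes the origin's minimality in the ray-wise sense stated in the proposition).
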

\begin{proof}
Consider $\tilde{f}(\epsilon X^1,\ldots,\epsilon X^K) - \tilde{f}(0,\ldots,0)$.  This gives
\begin{align}
&\ell(\Phi(\epsilon X^1,\ldots,\epsilon X^K)) + G(\epsilon X^1,\ldots,\epsilon X^K) - \ell(0) - G(0,\ldots,0) = \\
&\ell(\epsilon^p \Phi(X^1,\ldots,X^K)) -\ell(0) + \epsilon^{p'} G(X^1,\ldots,X^K) \geq \\
&\epsilon^p \left< \partial \ell(0),\Phi(X^1,\ldots,X^K) \right> + \epsilon^{p'} G(X^1,\ldots,X^K)
\end{align}
Recall that $p>p'$ and $\Phi(X^1,\ldots,X^K) \neq 0 \implies G(X^1,\ldots,X^K) > 0$, so $\forall(X^1,\ldots,X^K)$, $\tilde{f}(\epsilon X^1,\ldots,\epsilon X^K)-\tilde{f}(0,\ldots,0) \geq 0$ for $\epsilon > 0$ and sufficiently small, with equality iff $G(X^1,\ldots,X^K) = 0  \implies \Phi(X^1,\ldots,X^K)=0$, giving the result. 
\end{proof}

The above proposition shows that unless we have the special case where $(X^1,\ldots,X^K) = (0,\ldots,0)$ happens to be a global minimizer, then there will always exist a local minimum at the origin, and from the origin it will always be necessary to take an increasing path to escape the local minimum.  The case described above, where $p > p'$, is arguably the more common situation for mismatched degrees of homogeneity (as opposed to $p < p'$), and a typical example might be an objective function such as
\begin{equation}
	\ell(\Phi(X^1,\ldots,X^K)) + \lambda \sum_{i=1}^K \|X^i\|^{p'}
\end{equation}
where $\Phi$ is a positively homogeneous mapping with degree $K>2$ (e.g., the mapping of a deep neural network) but $p'$ is typcially taken to be only 1 or 2 depending on the particular choice of norm.

Conversely, in the situation where $p' > p$, then it is often the case that the regularization function is not sufficient to 'limit' the size of the factorization, in the sense that the objective function can always be decreased by allowing the size of the factors to grow.  As a simple example, consider the case of matrix factorization with the objective function
\begin{equation}
	\ell(UV^T) + \lambda (\|U\|^{p'} + \|V\|^{p'})
\end{equation}
If the size of the factorization doubles, then we can always take $[\tfrac{\sqrt{2}}{2} U \ \tfrac{\sqrt{2}}{2}U][\tfrac{\sqrt{2}}{2} V \ \tfrac{\sqrt{2}}{2} V]^T = UV^T$, so if $(\tfrac{\sqrt{2}}{2})^{p'} ( \|[U \ U]\|^{p'} + \|[V \ V]\|^{p'} ) < \|U\|^{p'} + \|V\|^{p'}$, then the objective function can always be decreased by simply duplicating and scaling the existing factorization.  It is easily verified that the above inequality is satisfied for many choices of norm (for example, all the $l_q$ norms with $q \geq 1$) when $p'>2$.  As a result, this implies that the degrees of freedom in the model will be largely dependent on the initial choice of the number of columns in $(U,V)$, since in general the objective function is typically decreased by having all entries of $(U,V)$ be non-zero.

\subsection{Implications for Neural Networks}

Examining our results specifically as they apply to deep neural networks, we first note that from our analysis we have shown that neural networks which are based on positively homogeneous mappings can be regularized in the way we have outlined in our framework so that the optimization problem of training the network can be analyzed from a convex framework.  Further, we suggest that our results provide a partial explanation to the recently observed empirical phenomenon where replacing the traditional sigmoid or hyperbolic tangent non-linearities with positively homogeneous non-linearities, such as rectification and max-pooling, significantly boosts the speed of optimization and the performance of the network \citep{Dahl:ICASSP2013,Maas:ICML2013, Krizhevsky:NIPS2012,Zeiler:ICASSP2013}.  Namely, by using a positively homogeneous network mapping, the problem then becomes a convex function of the network outputs.  Additionally, we have also shown that if the size of the network is allowed to be large enough then for any initialization a global minimizer can be found from purely local descent, and thus local minima are all equivalent.  This is a similar conclusion to the work of \citet{Choromanska:2014}, who analyzed the problem from a statistical standpoint and showed that with sufficiently large networks and appropriate assumptions about the distributions of the data and network weights, then with high probability any family of networks learned with different initializations will have similar objective values, but we note that our results allow for a well defined set of conditions which will be sufficient to guarantee the property.  Finally, many modern large scale networks do not use traditional regularization on the network weigh parameters such as an $l_1$ or $l_2$ norms during training and instead rely on alternative forms of regularization such as dropout as it tends to achieve better performance in practice\citep{Srivastava:JMRL2014,Krizhevsky:NIPS2012,Wan:ICML2013}.  Given our commentary above regarding the critical importance of balancing the degree of homogeneity between the mapping and the regularizer, an immediate prediction of our analysis is that simply ensuring that the degrees of homogeneity are balanced could be a significant factor in improving the performance of deep networks.

We conclude by noting that the main limitation of our current framework in the context of the analysis of currently existing state-of-the-art neural networks is that the form of the mapping we study here \eqref{eq:Phi_r_def} implies that the network architecture must consist of $r$ parallel subnetworks, where each subnetwork has a particular architecture defined by the elemental mapping $\phi$.  Previously, we mentioned as an example that the well known ImageNet network from \citep{Krizhevsky:NIPS2012} can be described by our framework by taking $r=1$ and using an appropriate definition of $\phi$; however, to apply Corollary \ref{corr:r1} to then test for global optimality, we must test whether it is possible to reduce the objective function by adding an entire network with the same architecture in parallel to the given network.  Clearly, this is a significant limitation for the application of these results and suggests two possibilities for future work.  The first is that simply implementing neural networks with a highly parallel network architecture and relatively simple subnetwork architectures could be advantageous and worthy of experimental study.  In fact, the ImageNet network already has a certain degree of parallelization as the initial convolutional layers of the network operate largely in parallel on separate GPU units.  More generally, here we have focused on mappings with form \eqref{eq:Phi_r_def} as it is conducive to analysis, but we believe that many of the results we have presented here can be generalized to more general mappings (and thus more general network architectures) using many of the principles and analysis techniques we have presented here; an extension we reserve for future work.

\subsection{Conclusions}

Here we have presented a general framework which allows for a wide variety of non-convex factorization problems to be analyzed with tools from convex analysis.  In particular, we have shown that for problems which can be placed in our framework, any local minimum can be guaranteed to be a global minimum of the non-convex factorization problem if one slice of the factorized tensors is all zero.  Additionally, we have shown that if the non-convex factorization problem is done with factors of sufficient size, then from any feasible initialization it is always possible to find a global minimizer using a purely local descent algorithm.

\bibliography{posfactor,../biblio/sparse,../biblio/vidal}

\end{document}